\newtheorem{theorem}{Theorem}[section]
\newtheorem{lemma}[theorem]{Lemma}
\newtheorem{proposition}[theorem]{Proposition}
\newtheorem{definition}[theorem]{Definition}
\newtheorem{example}[theorem]{Example}
\newtheorem{remark}[theorem]{Remark}
\newcounter{figures}[section]
\def\bN{{\mathbb N}}
\def\bR{{\mathbb R}}
\def\bL{{\mathbb L}}
\def\bS{{\mathbb S}}
\def\bB{{\mathbb B}}
\def\cB{\mathcal{B}}
\def\cC{\mathcal{C}}
\def\cD{\mathcal{D}}
\def\cF{\mathcal{F}}
\def\cG{\mathcal{G}}
\def\cH{\mathcal{H}}
\def\cK{\mathcal{K}}
\def\cM{\mathcal{M}}
\def\supp{{\rm{supp}\, }}
\def\R{\mathbb{R}}
\def\mm{\tau}
\def\ddelta{h}
\begin{document}

\title[Density estimation on metric spaces]
%{Mean squared error of kernel density estimators on metric spaces associated with operators}
{Pointwise density estimation on metric spaces and applications in seismology}

\author{G. Cleanthous}
%\author{Galatia Cleanthous}
\address{Department of Mathematics and Statistics,
National University of Ireland, Maynooth}
\email{galatia.cleanthous@mu.ie}
\author{A. G. Georgiadis}
%\author{Athanasios G. Georgiadis}
\address{School of Computer Science and Statistics,
Trinity College of Dublin}
\email{georgiaa@tcd.ie}
\author{P. A. White}
%\author{Athanasios G. Georgiadis}
\address{Department of Statistics,
Brigham Young University}
\email{pwhite@stat.byu.edu}

\subjclass[2010]{Primary 62G07 ; Secondary 58J35, 58Z05, 43A85}

\keywords{Ahlfors regularity, doubling volume, density estimation, out-of-sample performance, pointwise estimation, seismology}

\begin{abstract}
We are studying the problem of estimating density in a wide range of metric spaces, including the Euclidean space, the sphere, the ball, and various Riemannian manifolds. Our framework involves a metric space with a doubling measure and a self-adjoint operator, whose heat kernel exhibits Gaussian behaviour. We begin by reviewing the construction of kernel density estimators and the related background information. As a novel result, we present a pointwise kernel density estimation for probability density functions that belong to general H\"{o}lder spaces. The study is accompanied by an application in Seismology. Precisely, we analyze a globally-indexed dataset of earthquake occurrence and compare the out-of-sample performance of several approximated kernel density estimators indexed on the sphere.
\end{abstract}

\date{February, 2023}

\maketitle

\section{Introduction}\label{Introduction}
\setcounter{equation}{0}

Today, technology has equipped science with a massive amount of data that requires rigorous analysis. In astronomy, data can come from missions to other planets, telescopes observing distant parts of the universe, or programs studying Cosmic Microwave Background Radiation. In climatology and environmental science, sensors provide data on the atmosphere. Medicine uses scans to track the growth of tumors and monitor their development, while embryology uses data to track the growth and ensure the health of developing humans. Essentially all scientific fields now heavily rely on data.

The complexity and form of the data reflect their nature. In the examples mentioned above, the data can be represented by geometric structures that capture their form and dynamics. A dataset should be understood as independent realizations of a random variable (rv) $X$. Such a rv lives in some domain according to its nature. For instance when $X$ represents the locations on some planet, then $X$ lives on the sphere $\bS^2$ of the Euclidean space $\mathbb{R}^3$. The same is true of CMB radiation. For geological data in the interior of Earth, or another celestial body, the domain of study may be the ball $\bB^3$. Similarly, in the field of medicine, the domain of definition of $X$ can become much more complicated geometrically, and as a result, the general target domain becomes an abstract metric space $\cM$.

Let $X$ be a rv distributed on a metric measure space $\cM$ and let $f=f_{X}$ be its unknown probability density function (pdf). Density estimation, estimating a pdf from data $X_1,\dots,X_n$, represents an important problem in Statistics. To this end we need to construct a \textit{density estimator}, which is an object of the form $\hat{f}_{n}(X_1,\dots,X_n;x)$, where $\hat{f}_n:\cM^{n}\times\cM\rightarrow\bR$ a measurable function. A famous method for obtaining such an estimator is by the so-called ``kernel density estimators". 

Nonparametric Statistics approaches the problem of density estimation by constructing appropriate kernel density estimators, which can approximate any density with membership is certain regularity spaces. Historically, these methods were pioneered by \cite{Ros}, \cite{Parzen} and  \cite{BH}. The first books on the topic include \cite{silverman} and \cite{HGPT}, while today the book \cite{Tsybakov} is considered one of the main reference points. For an indicative list of contributions we refer to \cite{BKMP,Baraud,BM,BS,Birge,DG,DL,DL2,DJKP,EY,GL,GL2,GL3,GLne1,GLne2,HWC,HI,IK,KePT,KeLP,KLP2,Massart,Pel1,Pel2,Rig,RigT,ST}.

\vspace{0.2cm}

Here we study kernel density estimators on \textit{metric measure spaces} under very broad assumptions. The setting  we will work covers simultaneously the classical cases of Euclidean space $\bR^d$, the sphere $\mathbb{S}^{d}$, the ball $\mathbb{B}^d$ and many more significant examples of independent interest. Furthermore it contains more sophisticated geometric settings like manifolds and Lie groups. On the other hand, some techniques originated from spectral theory will simplify and unify several aspects of the approach. We shall operate in the setting put forward in \cite{CKP}, which we describe next in a simplified form:

%%%%%%%%%%%%% The setting

\vspace{0.2cm}

I. We assume that $(\cM,\rho,\mu)$ is a metric measure space such that
$(\cM, \rho)$ is locally compact with distance $\rho(\cdot, \cdot)$
and $\mu$ is a~positive Radon measure
satisfying:

\vspace{0.2cm}

\noindent
(i) {\em Ahlfors regularity:} There exist constants $c_1\ge1$ and $d>0$ such that
\begin{equation}\label{Ahlfors}
c_1^{-1}r^d\le |B(x,r)| \le c_1 r^{d}
\quad\hbox{for every $x \in \cM$ and $r>0$,}
\end{equation}
where $|B(x,r)|$ is the volume of the open ball $B(x,r):=\{y\in \cM:\rho(x,y)<r\}$ centred at $x$ of radius $r$.

The number $d$ is the so-called {\em Ahlfors dimension of the space}.

\vspace{0.2cm}

%%%%%%%%%%%

II. The second assumption is that there exists
an essentially self-adjoint non-negative operator $L$ on $\bL^2(\cM, d\mu)$,
mapping real-valued to real-valued functions,
such that the associated semigroup (more details in \S \ref{kdemetric}) $P_t=e^{-tL}$, $t>0$, consists of integral operators with
(heat) kernel $p_t(x,y)$ obeying the conditions:

\smallskip

\noindent
(ii) {\em Gaussian localization:} There exist constants $c_2,c_3>0$ such that
\begin{equation}\label{Gauss-local}
|p_t(x,y)|
\le c_2 t^{-d/2}\exp\Big\{-c_3\frac{\rho^2(x,y)}t\Big\}
\quad\hbox{for every} \;\;x,y\in \cM,\,t>0.
\end{equation}

\noindent
(iii) {\em H\"{o}lder continuity:} There exists a constant $\alpha>0$ such that
\begin{equation}\label{lip}
\big|  p_t(x,y) - p_t(x,y')  \big|
\le c_2\Big(\frac{\rho(y,y')}{\sqrt t}\Big)^\alpha
t^{-d/2}\exp\Big\{-c_3\frac{\rho^2(x,y)}t\Big\}
\end{equation}

for every $x, y, y'\in \cM$ such that $\rho(y,y')\le \sqrt{t}$ and $t>0$.

\smallskip

\noindent
(iv) {\em Markov property:}
\begin{equation}\label{hol3}
\int_{\cM} p_t(x,y) d\mu(y)= 1
\quad\hbox{for every $x\in \cM$ and $t >0$.}
\end{equation}

\vspace{0.2cm}

This setting we study generalizes (by default) the Euclidean space. Moreover, it contains spaces like the sphere, the ball, the interval, cubes/rectangles, the simplex, Riemannian manifolds with non-negative Ricci curvature and more, each equipped with their natural metrics and measures associated with Laplace or Laplace-Beltrami operators. For more examples we refer the reader to \cite{CKP,GN,KPX,KP}. 

Some first contributions in Statistics in this generality can be found in \cite{CaKPi,CGKPP,CGP,KOPP}, while there is a large number of open problems in front of the community. 
\vspace{0.2cm}

The \textit{aim} of the present study is threefold:

$(\alpha)$ To \textit{review} the setting and the construction of kernel density estimators together with the corresponding theoretical background, which is demanding, on the broad framework under study; Section \ref{kdemetric}.

$(\beta)$ As \textit{novel results}, we obtain optimal pointwise density estimation on H\"{o}lder spaces; see Sections \ref{Sec:MSE} and \ref{Sec:Doubling} and we shed light in the assumptions and methods.

$(\gamma)$ As an \textit{application}, we perform a data-analysis of earthquakes; Section \ref{S:data}, using our kernel density estimators. Precisely we compare the out-of-sample performance of several approximated kernel density estimators and we plot the heat map of the estimated density using the selected model.

\vspace{0.2cm}

Remarks and Examples are placed in several points of the manuscript for highlighting notions and ideas.
The new results are contained in Section \ref{Sec:MSE} and under more general assumptions in Section \ref{Sec:Doubling} and are accompanied with remarks that could be used for future studies.

Section \ref{S:data} is dedicated to the data analysis of \textit{earthquakes}. In this Section we apply the theoretical results of the paper and show how one can use these approaches with occurrence data on the Earth. The data used in this analysis are freely available through the United States Geological Survey website \url{https://earthquake.usgs.gov/earthquakes/search/}.
\vspace{0.2cm}

\noindent
{\em Notation}:
Throughout positive constants will be denoted by $c$, and will be allowed to vary
at every occurrence. The dependence of a constant to the geometric structure constants $c_1,c_2,c_3,\alpha$ and $d$ will not be stated, but the dependence to a parameter $q$, will be stated as $c_q$. We denote by $\bN,\;\bR,\;\bR_+$ the sets of positive integers, real numbers and non-negative real numbers respectively. If $\tau\in\bN$, the class of differentiable functions on $\bR_+$ with continuous derivatives up to order $\tau$ will be stated as $\cC^{\tau}(\bR_+)$. For $s>0$, we will denote by $\lfloor s\rfloor$ the greatest integer that is strictly less than $s$ and by $\lceil s\rceil$ the smaller integer strictly larger than $s$. %The domain of an operator $T$, will be stated by $\cD(T)$.

\section{Density estimation on metric spaces associated with operators: \\ A review}\label{kdemetric}

%Our paper makes an extensive use of Global Analysis' notions, and we will assist the reader, providing some fundamental minimum background of the tools that we need. 

The first part of our study consists of a review of density estimation on metric spaces associated with operators. One of the milestones is to construct kernels. We expand here the methods used in \cite{CGKPP,CGP} inspired by the corresponding machinery built in \cite{CKP} based on the powerful Spectral Theory.

\subsection{Functional calculus}\label{subsec:func-calc}
We start by some fundamental notions of Spectral Theory providing a minimum background of this wide scientific field; the reader is further referred to
\cite{Prugov,RS,Yosida}. 
\vspace{0.2cm}

Recall that $L$ is assumed to be a non-negative self-adjoint operator
that maps real-valued to real-valued functions. Then \cite[Section 5]{Prugov} $L$ admits a \textit{unique spectral measure} $E$; that is a projector-valued mapping as follows: 

Denote by $\cB$ the Borel $\sigma$-algebra on $\bR$. For every $S\in\cB$, we correspond an orthogonal projection $E(S):\bL^2(\cM,d\mu)\rightarrow \bL^2(\cM,d\mu)$  such that:

(i) $E(\bR)=I$ (the identity operator on $\bL^2(\cM,d\mu)$).

(ii) For every sequence of disjoint Borel sets $\{S_n\}_{n\in\bN}\subset\cB$
\begin{equation}
E(S)=\sum_{n=1}^{\infty}E(S_n),\quad\text{where}\;\; S:=\bigcup_{n=1}^{\infty}S_{n},
\end{equation}
in the strong $\bL^2(\cM,d\mu)$ sense; i.e. for every $f\in\bL^2(\cM,d\mu)$,
\begin{equation}
\Big\|\Big(E(S)-\sum_{n=1}^{N}E(S_n)\Big)f\Big\|_2\xrightarrow{N\rightarrow\infty}0.
\end{equation}

Thanks to (ii), for every $f,g\in\bL^2(\cM,d\mu)$ the set-function
\begin{equation}
\nu_{f,g}(S):=\langle E(S)f,g\rangle,\quad\text{for every }\;S\in\cB,
\end{equation}
is a complex measure on $(\bR,\cB)$. 

Moreover for every $f\in\bL^2(\cM,d\mu)$ the set-function
\begin{align*}
\nu_{f}(S)&:=\nu_{f,f}(S)=\langle E(S)f,f\rangle
\\
&=\langle E(S)^2 f,f\rangle=\langle E(S)f,E(S)f\rangle=\|E(S)f\|_2^2,\quad S\in\cB
\end{align*}
is a measure on $(\bR,\cB)$, which is finite and precisely $\nu_{f}(\bR)=\|f\|_2^2<\infty$.
\vspace{0.2cm}

The study can be slightly simplified by the following projector-valued function
\begin{equation}
\bR\ni\lambda\mapsto E_{\lambda}:=E(I_\lambda),\quad I_{\lambda}:=(-\infty,\lambda]
\end{equation}
which is referred as the \textit{spectral resolution of} $L$. Moreover for every $f\in\bL^2(\cM,d\mu)$ and every $\lambda\in\bR$, we have $\nu_{f}(\lambda):=\nu_{f}(I_\lambda)=\langle E_{\lambda}f,f\rangle=\|E_{\lambda}f\|_2^2$. 

Given further that $L$ is assumed non-negative, by \cite[Theorem 6.3]{Prugov}, the domain $\text{Dom}(L)$ of $L$ consists of all functions $f\in\bL^2(\cM,d\mu)$ such that
\begin{equation}
\int_{0}^{\infty}\lambda^2 d\nu_{f}(\lambda)=\int_{0}^{\infty}\lambda^2 d\langle E_{\lambda}f,f\rangle<\infty.
\end{equation}
Moreover for every $f\in\text{Dom}(L)$ and $g\in\bL^2(\cM,d\mu)$
\begin{equation}
\langle Lf,g\rangle=\int_{0}^{\infty} \lambda d\langle E_{\lambda}f,g\rangle.
\end{equation}

It is customary to write symbolically
\begin{equation}\label{Lsymbolically}
L=\int_{0}^{\infty}\lambda dE_{\lambda},
\end{equation}
the so-called \textit{spectral decomposition} of $L$.
\vspace{0.2cm}

The next logical step is the \textit{functional calculus} associated with the operator $L$; see also \cite[Theorem VIII.5]{RS}.

Let $g:\bR_{+}\rightarrow\bR$ a Borel measurable function. Then  the  operator $g(L)$ defined on
\begin{equation}
\text{Dom}(g):=\big\{\varphi\in\bL^2(\cM,d\mu):\;\int_{0}^{\infty}|g(\lambda)|^2 d\langle E_{\lambda}\varphi,\varphi\rangle<\infty\big\},
\end{equation}
as 
\begin{equation}
\langle g(L)\varphi,\psi\rangle =\int_{0}^{\infty}g(\lambda) d\langle E_{\lambda}\varphi,\psi\rangle,\quad\text{for every }\varphi\in\text{Dom}(g),\;\psi\in\bL^2(\cM,d\mu),
\end{equation}
is a self-adjoint operator mapping real-valued functions to real-valued functions. If $g$ is further assumed to be bounded, then $\text{Dom}(g)=\bL^2(\cM,d\mu)$ and $g(L):\bL^2(\cM,d\mu)\rightarrow\bL^2(\cM,d\mu)$ is a bounded operator. The operator $g(L)$ is referred as the \textit{spectral multiplier associated with} $g$ \textit{and} $L$ and it is symbolically expressed ---in the spirit of (\ref{Lsymbolically})--- as

\begin{equation}\label{g(L)symbolically}
g(L)=\int_{0}^{\infty}g(\lambda) dE_{\lambda}.
\end{equation}

The above spectral multipliers can take an explicit form for particular operators $L$ and metric spaces, as we will see in \S \ref{S:examples}.

For the purpose of the study of kernel density estimators we turn our attention to spectral multipliers associated with the operator $\sqrt{L}$, which is well-defined and self-adjoint; see \cite{Yosida}. The exact reasons behind the switch to $\sqrt{L}$ are discussed in \cite[Remark 2.2.$(\alpha)$]{CGP}. 

We denote by $\{F_{\lambda}:\lambda\ge0\}$ the spectral resolution of $\sqrt{L}$. Then $F_{\lambda}=E_{\lambda^2}$ and for every Borel measurable $g:\bR_{+}\rightarrow\bR$ it holds
\begin{equation}\label{g(sqrtL)}
g(\sqrt{L})=\int_{0}^{\infty}g(\lambda)dF_{\lambda}=\int_{0}^{\infty}g(\sqrt{\lambda})dE_{\lambda}.
\end{equation}

\subsection*{Summary and Notation}
For the rest of our study we fix the following terminology and notation.

As a \textit{symbol} we will refer to a Borel measurable and bounded function 
$$k:\bR_+\rightarrow\bR\quad\text{(symbol)}.$$ 

The spectral multiplier associated with the symbol $k$ and the operator $\sqrt{L}$ as in (\ref{g(sqrtL)}) will be denoted by the corresponding capital letter: $$K:=k(\sqrt L):\bL^2(\cM,d\mu)\rightarrow\bL^2(\cM,d\mu)\quad \text{(spectral multiplier)}.$$

By the above discussion, the operator $K$:

\noindent (i) is bounded on $\bL^2(\cM,d\mu)$, 

\noindent (ii) is self-adjoint, and 

\noindent (iii) maps real-valued functions to real-valued functions.
\vspace{0.2cm}

For the purpose of kernel density estimation we are interested in the following class of operators: We say that $K$ is an \textit{integral operator}, when there exists a measurable function $\cK(x, y)$ ---referred as the \textit{kernel} of the operator $K$--- 
$$\cM\times\cM\ni(x,y)\mapsto \cK(x,y)\in\bR\quad(\text{kernel})$$ such that
\begin{equation}
K(f)(x)=\int_{\cM}\cK(x,y)f(y)d\mu(y),\quad f\in\text{Dom}(K),\;x\in\text{Dom}(f).
\end{equation}
Note further that when the spectral multiplier $K=k(\sqrt{L})$ is an integral operator, then its kernel is real valued and symmetric; $$\cK(x,y)=\cK(y,x)\in\bR.$$
Such kernels are exactly the objects we will use for the kernel density estimation.
\vspace{0.2cm}

As always we need a notion of \textit{dilations} suitable for use in the current framework.

Let $k:\bR_{+}$ a symbol, $K$ the spectral multiplier associated with $k$ and $\sqrt{L}$ and assume that $K$ is an integral operator with kernel $\cK(x,y)$, as above. For every $h>0$ we denote by 
\vspace{0.2cm}

\noindent (i) $k_{h}(\lambda):=k(h\lambda),\;\lambda\in\bR_+$, the symbol induced by $k$ dilated by $h$. 

\noindent (ii) $K_h=k_h(\sqrt{L})=k(h\sqrt{L})$, the spectral multiplier associated with $k_h$ and $\sqrt{L}$. %and 

\noindent (iii) $\cK_h(x,y)=\cK_h(y,x)$, the symmetric real-valued kernel of $K_h$.

\vspace{0.2cm}

We shall need the following result from smooth functional calculus induced by the heat kernel,
developed in \cite{CKP,KP}. We fix the following notation first: Let $\ddelta>0$ and $\mm>0$. We denote by
\begin{equation}
\label{KernelD}
\cD_{\ddelta,\mm}(x,y):=\ddelta^{-d}\big(1+\ddelta^{-1}\rho(x,y)\big)^{-\mm},\quad\text{for}\;x,y\in \cM.
\end{equation}

%%%%%%%% Theorem

\begin{theorem}\label{thm:S-local-kernels}%\cite{KP}
Suppose $k:\bR_{+}\rightarrow\bR$ is a symbol such that: $k\in \cC^\mm(\bR_+)$, $\mm>d$, 
\begin{equation}
\label{decay}
|k^{(\nu)}(\lambda)|\le C_\mm(1+\lambda)^{-r} \quad\text{for every}\; \lambda\ge 0\;\text{and}\;0\le \nu\le \mm,\;\text{where}\; r > \mm+d,
\end{equation}
and $k^{(2\nu+1)}(0)=0$ for every $\nu\ge 0$ such that $1\le 2\nu+1 \le \mm$.

Then $K_\ddelta$, $\ddelta>0$, is an integral operator with kernel $\cK_\ddelta (x, y)$
satisfying
\begin{equation}
\label{kernel-decay-D}
\big|\cK_\ddelta (x, y)\big|
\le cC_\mm \cD_{h,\mm}(x,y),
\end{equation}
where
$c>0$ is a constant depending on $\mm$ and
the structural geometric constants of the setting.
%the structural constant of the setting.

Moreover, for every $\ddelta>0$ and $x\in \cM$
\begin{equation}\label{Th2.2eq1}
\int_{\cM} \cK_\ddelta (x, y)d\mu(y)=k(0).
\end{equation}
\end{theorem}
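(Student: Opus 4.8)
The statement is a result from the heat-kernel-driven smooth functional calculus of \cite{CKP,KP}, and the plan is to follow that route while keeping track of the finite number of derivatives at our disposal. The first step is a dyadic splitting of the symbol: fix a smooth partition of unity $1=\sum_{j\ge 0}\varphi_j$ on $\bR_+$ with $\varphi_0\equiv 1$ near $0$, $\supp\varphi_0\subset[0,2]$ and $\supp\varphi_j\subset[2^{j-1},2^{j+1}]$ for $j\ge 1$, and set $k_j:=\varphi_j k$. By \eqref{decay} one has $\|k_j^{(\nu)}\|_\infty\le cC_\mm 2^{-jr}$ for all $0\le\nu\le\mm$, and $K_\ddelta=\sum_{j\ge 0}k_j(\ddelta\sqrt L)$ in the strong $\bL^2(\cM,d\mu)$ sense; it then suffices to estimate the kernel of each $k_j(\ddelta\sqrt L)$ and to sum.

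For $j\ge 1$ the symbol $k_j$ is supported away from the origin, so I would rescale, $k_j(\ddelta\sqrt L)=\Psi_j(2^{-j}\ddelta\sqrt L)$ with $\Psi_j(u):=k_j(2^j u)$ supported in $[\tfrac12,2]$ and $\|\Psi_j\|_{\cC^\mm}\le cC_\mm 2^{-j(r-\mm)}$, and invoke the band-limited kernel estimate of \cite{CKP}: a $\cC^\mm$ symbol supported in a fixed compact subinterval of $(0,\infty)$, evaluated at a scale $s$ in $\sqrt L$, gives an integral operator whose kernel is bounded by $c\,\|\Psi_j\|_{\cC^\mm}\,s^{-d}(1+s^{-1}\rho(x,y))^{-\mm}$. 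This is where assumptions \eqref{Gauss-local} and \eqref{lip} are consumed: the Gaussian bound \eqref{Gauss-local} yields the on-diagonal size $s^{-d}$ (via $\bL^1\!\to\!\bL^2\!\to\!\bL^\infty$ estimates for band-limited operators, obtained by pairing with the heat semigroup, and via the analyticity of $P_t$, which controls the time-derivatives of $p_t$ entering the calculus), while the Hölder regularity \eqref{lip} is what turns this into genuine off-diagonal polynomial decay of order $\mm$. The bottom piece $k_0$ is the delicate one, and it is precisely here that the condition $k^{(2\nu+1)}(0)=0$, $2\nu+1\le\mm$, is needed: it makes the even extension of $k_0$ to $\bR$ a $\cC^\mm$ function, so that the presence of $\sqrt L$ rather than $L$ does no harm at the bottom of the spectrum and the same estimate applies, bounding the kernel of $k_0(\ddelta\sqrt L)$ by $c\,\ddelta^{-d}(1+\ddelta^{-1}\rho(x,y))^{-\mm}=c\,\cD_{\ddelta,\mm}(x,y)$. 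Summing, the $j$-th term is dominated by $cC_\mm 2^{-j(r-\mm-d)}\cD_{\ddelta,\mm}(x,y)$ (the factor $2^{jd}$ coming from $s^{-d}=(2^{-j}\ddelta)^{-d}$, the weight $(1+2^{j}\ddelta^{-1}\rho)^{-\mm}$ only helping), and the geometric series converges exactly because $r>\mm+d$; this gives \eqref{kernel-decay-D}. Since $\mm>d$, Ahlfors regularity \eqref{Ahlfors} makes $\cD_{\ddelta,\mm}(x,\cdot)$ integrable over $\cM$, so $\cK_\ddelta(x,\cdot)\in\bL^1$, which also confirms that $K_\ddelta$ is an integral operator with kernel $\cK_\ddelta=\sum_{j\ge 0}(\text{kernel of }k_j(\ddelta\sqrt L))$.

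For the normalisation \eqref{Th2.2eq1} the plan is to realise $K_\ddelta$ as a superposition of heat operators. The map sending a symbol $k$ satisfying the hypotheses to $\int_\cM\cK_\ddelta(x,y)\,d\mu(y)-k(0)$ — with $\cK_\ddelta$ the associated kernel — is linear and, by \eqref{kernel-decay-D} applied to differences and dominated convergence, continuous in the norm $\sup_{0\le\nu\le\mm}\sup_{\lambda\ge 0}(1+\lambda)^{r}|k^{(\nu)}(\lambda)|$; it vanishes on symbols $\lambda\mapsto\sum_i a_i e^{-t_i\lambda^2}$, since for these $K_\ddelta=\sum_i a_i P_{t_i\ddelta^2}$ has kernel $\sum_i a_i p_{t_i\ddelta^2}$ and the Markov property \eqref{hol3} gives $\int_\cM\cK_\ddelta(x,y)\,d\mu(y)=\sum_i a_i=k(0)$; and such Gaussian combinations are dense in the relevant class (after a preliminary regularisation of $k$, if one wishes). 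Hence $\int_\cM\cK_\ddelta(x,y)\,d\mu(y)=k(0)$ for every admissible $k$. Equivalently, one can argue directly that \eqref{kernel-decay-D} allows $K_\ddelta$ to be applied to the constant function $\ONE$ along an exhaustion of $\cM$ by $\bL^2$ cut-offs, and that $\ONE$ behaves as a generalised eigenfunction of $L$ with eigenvalue $0$, so that $K_\ddelta\ONE=k(0)\ONE$.

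The hard part is the functional-calculus estimate at the bottom of the spectrum: making the vanishing-odd-derivative hypothesis actually produce the full decay exponent $\mm$ (rather than a halved one) for the kernel of $k_0(\ddelta\sqrt L)$, and verifying that the finite smoothness $\mm$ and the decay rate $r>\mm+d$ propagate through the rescalings so that the dyadic series sums to a multiple of $\cD_{\ddelta,\mm}$ — the Hölder assumption \eqref{lip} is indispensable exactly at that point. Everything else (the dyadic summation, passing from strong $\bL^2$ convergence to pointwise statements, and the Fubini interchange behind \eqref{Th2.2eq1}) is routine once \eqref{kernel-decay-D} is in place, and the band-limited building blocks are those supplied by \cite{CKP,KP}.
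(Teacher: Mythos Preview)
The paper does not prove this theorem: it is stated as a background result ``from smooth functional calculus induced by the heat kernel, developed in \cite{CKP,KP}'', with no proof given in the paper itself. So there is no in-paper proof to compare against.

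Your sketch follows the route actually taken in \cite{CKP,KP}: dyadic Littlewood--Paley decomposition of the symbol, the compactly-supported spectral-multiplier kernel estimate for each piece (which is where the Gaussian bound and the H\"older continuity of the heat kernel enter), the role of the vanishing odd derivatives in making the even extension of the low-frequency part $\cC^\mm$ at the origin, and the geometric summation under $r>\mm+d$. The identity \eqref{Th2.2eq1} in \cite{CKP,KP} is obtained by the heat-superposition/Markov-property argument you describe. As a high-level outline this is faithful to the source; just be aware that the actual proof in \cite{CKP,KP} of the band-limited kernel bound (your ``$c\,\|\Psi_j\|_{\cC^\mm}\,s^{-d}(1+s^{-1}\rho(x,y))^{-\mm}$'') is itself nontrivial --- it goes through finite-propagation-speed/Davies--Gaffney estimates and a careful passage from Gaussian to polynomial off-diagonal decay --- and constitutes the real work you are black-boxing.
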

\begin{remark}\label{Remark:FC} Let us comment on Theorem \ref{thm:S-local-kernels}.

$(\alpha)$ Let $k\in C^{\mm}(\bR)$ be an even function. Then the assumption $k^{(2\nu+1)}(0)=0$, $0<2\nu+1\le\mm$, holds automatically.

$(\beta)$ Let $k\in\cC^{\mm}(\bR_+)$ be such that $\supp k\subset[0,b]$, for some $b>0$. Then (\ref{decay}) holds for $C_{\mm}:=(1+b)^r\max\{\|k^{(\nu)}\|_{\infty}:0\le\nu\le\mm\}$. %Actually the first step in the development of functional calculus in \cite{KP} was to establish the decay of kernels generated by such compactly supported symbols.

$(\gamma)$ Let us return to the setting's Assumption II and seed more light on it. The heat kernel $p_t(x,y)$ consists the kernel of the operator $e^{-tL}$. At this point, being more familiar with Spectral Theory, we define $k(\lambda):=e^{-\lambda^2}$. Then for every $t>0$ the operator $e^{-tL}$ is just the spectral multiplier $K_{\sqrt{t}}$, which is an integral operator by Theorem \ref{thm:S-local-kernels}, and the heat kernel equals $p_{t}(x,y)=\cK_{\sqrt{t}}(x,y)$.
\end{remark}

\subsection{Examples}\label{S:examples} We present the most basic examples of spaces $(\cM,\rho,\mu,L)$ falling under our umbrella. For more examples we refer to \cite{CGKPP,KPX} and the references therein. In the following spaces we also express the kernels obtained in an abstract sense of existence in Theorem \ref{thm:S-local-kernels}.

\begin{example}\label{exampleRn}
Let $\cM=\bR^d$ the Euclidean space associated with the operator $L=-\Delta$, the negative Laplacian. By default this space is included in our study. 

We proceed to the kernels. Let $k:\bR_{+}\rightarrow\bR$ a symbol satisfying the assumptions of Theorem \ref{thm:S-local-kernels}. We extend the symbol $k$ on $\bR^d$, radially; $\tilde{k}(\xi):=k(|\xi|)$, for every $\xi\in\bR^d$. Then the spectral multiplier $K=k(\sqrt L)$ is nothing but the Fourier multiplier associated with the symbol $\tilde{k}(\xi)$. Denote by $\hat{f}$ and by $\cF^{-1}f$ the Fourier transform and the inverse Fourier transform of the function $f:\bR^d\rightarrow\bR$, respectively. Then:
\begin{align*}
K(f)(x)&=\cF^{-1}(\tilde{k}\hat{f})(x)
\\
&=\kappa\ast f(x),\quad \text{where}\;\;\kappa:=\cF^{-1}\tilde{k}
\\
&=\int_{\bR^d}\kappa(x-y)f(y)dy,\quad \text{so}\;\;\cK(x,y)=\kappa(x-y).
\end{align*}

The kernel $\cK_{h}(x,y)$, by the properties of the Fourier transform, is the familiar
\begin{equation}
\label{kernelonRd}
\cK_{h}(x,y)=\frac{1}{h^d}\kappa\left(\frac{x-y}{h}\right),\quad x,y\in\bR^d,\;h>0.
\end{equation}

This example sheds light on the notion of spectral multipliers. Specifically, on $\bR^d$, they are the well-known Fourier multipliers, and the corresponding kernels $\cK(x,y)$ are the convolution kernels of the symbol $\kappa=\cF^{-1}\tilde{k}$. %For the extension from $\bR^d$ to general metric spaces associated with operators, the notion of convolution plays a key role in the development of Analysis and Statistics in this generality. 
Moreover, the existing knowledge on $\bR^d$, together with the present correspondence, acts as a guide for the several developments on the setting of metric spaces associated with operators.
\end{example}

\vspace{0.2cm}

In the next examples we consider spaces $\cM$ of finite measure. In this case ---as it has been proved in \cite[Proposition 3.20]{CKP}--- the operator $L$ presents a discrete spectrum $0\le \lambda_0<\lambda_1<\cdots$. This implies the discrete decomposition
\begin{equation*}
\mathbb{L}^2=\bigoplus_{\nu=0}^{\infty} E_{\nu};\quad E_{\nu}:={\rm{ker}}(L-\lambda_{\nu}I),\;\nu\ge0.
\end{equation*} 

Let $\{e_{i}^{\nu}\}_{i=1,\dots,d_{\nu}}$ be an orthonormal basis of the eigenspace $E_{\nu}$ and $d_{\nu}:={\rm{dim}}E_{\nu}<\infty$, $\nu\ge0$. Then we have the projector operators
\begin{equation*}
P_{\nu}(x,y):=\sum_{i=1}^{d_{\nu}}e_i^{\nu}(x)\overline{e_i^{\nu}(y)},\quad x,y\in\cM,\;\nu\ge0.
\end{equation*}

Let $k:\bR_{+}\rightarrow\bR$ a symbol satisfying the assumptions of Theorem \ref{thm:S-local-kernels}. The corresponding spectral multiplier $K_h$, $h>0$, has the following kernel
\begin{equation}\label{kernelforgeneraldiscrete}
\cK_{h}(x,y)=\sum_{\nu=0}^{\infty}k(h\sqrt{\lambda_{\nu}})P_{\nu}(x,y),\quad x,y\in\cM.
\end{equation}

\noindent For more details we refer to \cite{CaKPi,KPX}. 

Importantly, when dealing with a specific metric measure space of finite measure, associated with an operator $L$, we just need to know (i) the eigenvalues and (ii) the projector operators, and then we get the kernels in (\ref{kernelforgeneraldiscrete}).

Next, we present precise expressions of (\ref{kernelforgeneraldiscrete}) on the cases of the unit sphere and the unit ball of $\bR^3$, which seems to be the most applicable. 
\vspace{0.2cm} 

\begin{example}\label{exampleSn}
Let $\cM=\bS^2$ the unit sphere of $\bR^{3}$ associated with the angular distance, the spherical measure and the spherical Laplacian. Then this space satisfies our Assumptions I and II; \cite{KPX}. The kernel takes the form:
\begin{equation}
\label{kernelons2}
\cK_h(\xi,\eta)=\sum_{\nu=0}^{\infty}\frac{1+2\nu}{4\pi}k\big(h\sqrt{\nu(\nu+1)}\big)P_{\nu}\big(\langle\xi,\eta\rangle\big),\quad\xi,\eta\in\bS^2,
\end{equation}
where $P_{\nu}$ the Legendre polynomials and $\langle\cdot,\cdot\rangle$ the inner product on $\bR^3$.

\end{example}

\begin{example}\label{exampleBn}
The unit ball $\bB^3$ of $\bR^3$ equipped with the distance \cite{DaiXu}
\begin{equation}\label{distanceB3}
\rho(x,y)=\arccos\big(\langle x,y\rangle+\sqrt{1-|x|^2}\sqrt{1-|y|^2}\big),
\end{equation}
the measure
\begin{equation}\label{measureB3}
d\mu(x)=\big(1-|x|^2\big)^{-1/2}dx,
\end{equation}
and the operator
\begin{align}\label{operatorB3}
L&=-\sum_{i=1}^{3}\partial_i^2
+\sum_{i,j=1}^{3}x_i x_j \partial_i \partial_j
+3\sum_{i=1}^{3}x_i \partial_i,
\end{align}
satisfies the assumptions of our setting; \cite{KPX}.

Expanding the discussion in \cite{CGKPP} and using \cite{KPY}, the kernel takes the form
\begin{equation}
\label{kernelonb3}
\cK_{h}(x,y)=\sum_{\nu=0}^{\infty}\frac{1+\nu}{2\pi^2}k\big(h\sqrt{\nu(\nu+2)}\big)G_{\nu}(x,y),\quad x,y\in\bB^3,
\end{equation}
where
\begin{align}\label{Gn}
G_{\nu}(x,y):=C_{\nu}^{1}&\big(\langle x,y\rangle+\sqrt{1-|x|^2}\sqrt{1-|y|^2}\big)
\nonumber
\\
&+C_{\nu}^{1}\big(\langle x,y\rangle-\sqrt{1-|x|^2}\sqrt{1-|y|^2}\big)
\end{align}
and $C_{\nu}^1$ the Gegenbauer polynomials of order $1$.
\end{example}

We emphasize that the kernels existing by Theorem \ref{thm:S-local-kernels} may look completely different as in (\ref{kernelonRd}), (\ref{kernelons2}), and (\ref{kernelonb3}); however, all of them enjoy the decay in (\ref{kernel-decay-D}), which is sharp in all the above cases as it can be confirmed by the properties of Fourier transform, Legendre polynomials and Gegenbauer polynomials, respectively. 

The advantage of the general theory is that it unifies spaces of different nature, extracts general results and expresses them in the particular cases of interest.

\subsection{Kernel density estimators on $\boldsymbol{(\cM,L)}$}

We are now ready to present kernel density estimators on the current general setting as introduced in \cite{CGKPP}.
\begin{definition}\label{D:kdeM} Let $n\in\mathbb{N}$ and $X_1,\dots,X_n$ be iid random variables on $\cM$. Let $k:\mathbb{R}_+\rightarrow \mathbb{R}$ be a symbol satisfying the assumptions of Theorem \ref{thm:S-local-kernels}, as well as $k(0)=1$, and $h>0$ a bandwidth. The associated kernel density estimator (kde) is defined as 
\begin{equation}\label{kde2}
\hat{f}_{n,h}(x):=\hat{f}_{n,h}(X_1,\dots,X_n;x):=\frac{1}{n}\sum\limits_{i=1}^{n} \cK_{h}(X_i,x), \quad x\in \cM.
\end{equation}
\end{definition}

Note that (\ref{kde2}) is well-defined for every $k$, as guaranteed by Theorem \ref{thm:S-local-kernels}. In addition (\ref{Th2.2eq1}) implies the fundamental property 
$$\int_{\cM}\cK_{h}(x,y)d\mu(y)=k(0)=1,$$ 
which is a standard assumption for the kernels in the Euclidean setting.

We further express explicitly the kde in (\ref{kde2}) on $\bR^d$, $\bS^2$ and $\bB^3$, just by expanding the Examples \ref{exampleRn}, \ref{exampleSn} and \ref{exampleBn}. Let $k$ a symbol as in Definition \ref{D:kdeM} and $h>0$. %Then
\vspace{0.2cm}

$(\alpha)$ When $\cM=\bR^d$, and $L=-\Delta$,
\begin{equation}\label{kdeRn}
\hat{f}_{n,h}(x)=\frac{1}{n}\frac{1}{h^d}\sum\limits_{i=1}^{n} \kappa\Big(\frac{X_i-x}{h}\Big), \quad x\in\bR^d,\;\;\text{where}\;\;\kappa=\cF^{-1}\tilde{k},
\end{equation}
which is the very well-known form of a kde on $\bR^d$.

$(\beta)$ When $\cM=\bS^2$, equipped with the angular distance, the spherical measure and the spherical Laplacian,
\begin{equation}
\label{kdes2}
\hat{f}_{n,h}(\xi)=\frac{1}{n}\sum\limits_{i=1}^{n}\sum_{\nu=0}^{\infty}\frac{1+2\nu}{4\pi}k\big(h\sqrt{\nu(\nu+1)}\big)P_{\nu}\big(\langle\xi,X_i\rangle\big),\quad\xi\in\bS^2.
\end{equation}

$(\gamma)$ When $\cM=\bB^3$, equipped with the distance in (\ref{distanceB3}), the measure in (\ref{measureB3}) and the operator in (\ref{operatorB3}),
\begin{equation}
\label{kdeb3}
\hat{f}_{n,h}(x)=\frac{1}{n}\sum\limits_{i=1}^{n}\sum_{\nu=0}^{\infty}\frac{1+\nu}{2\pi^2}k\big(h\sqrt{\nu(\nu+2)}\big)G_{\nu}(x,X_i),\quad x\in\bB^3,
\end{equation}
where $G_{\nu}$ as in (\ref{Gn}).

\subsection{H\"{o}lder spaces}\label{sec:holder}

We are closing this review by presenting some regularity spaces. In nonparametric estimation we assume that the density under study belongs to large regularity spaces. Regularity spaces on $\bR$ and $\bR^d$ have been studied for a century within many scientific disciplines. Historically, the first way to express the notion of regularity (or smoothness) was in terms of derivatives, and gradually Fourier transforms and convolutions extended such notions. For the historical path, we refer the reader to \cite{T}. 

H\"{o}lder spaces are a suitable choice for the purpose of pointwise density estimation (see \cite{Tsybakov}) that we will obtain in the present study. Let us recall this class on $\bR^1$: Let $s>0$ and denote by $\ell:=\lfloor s\rfloor$ the greatest integer strictly less than $s$. The H\"{o}lder space $\dot{\cH}^s(\bR)$ is the set of function $f:\bR\rightarrow\bR$ that are $\ell$-times differentiable and 
\begin{equation}
\label{HsR}
|f^{(\ell)}(x)-f^{(\ell)}(y)|\le c|x-y|^{s-\ell},
\end{equation}
for some constant $0\le c<\infty$ and every $x\neq y$. Note that slightly different versions of these spaces can be found in different sources, but the overall purpose is more or less the same.

\vspace{0.2cm}

We must define a suitable extension of (\ref{HsR}) on a metric space. For the right hand side, we simply use a power of the distance $\rho(x,y)$. Metric spaces lack the notion of derivatives, so a substitute for the left side is more challenging, but a solution comes from the operator $L$. In all of our examples in Section \ref{S:examples} we observe that the differentiability is linked with the definition of $L$. We also note that in every case presented in Section \ref{S:examples} $L$ is a differential operator of order 2. These facts justify the following definition:

\begin{definition}\label{D:HS}
Let $s>0$ and denote by $\ell=\lfloor s\rfloor$. The H\"{o}lder space of order $s$, $\dot{\cH}^s$, is defined as the set of all functions $f:\cM\rightarrow\bR$ such that
\begin{equation}\label{Ho1}
\|f\|_{\dot{\cH}^s}:=\sup\limits_{x\neq y} \frac{\big|L^{\ell/2}f(x)-L^{\ell/2}f(y)\big|}{\rho(x,y)^{s-\ell}}<\infty.
\end{equation}
\end{definition}

For the connection between these spaces and other smoothness spaces in our setting, we refer to \cite{CKP}. For the use of regularity spaces in Nonparametric Statistics in this generality, we further refer to \cite{CaKPi,CGKPP,CGP}.

\section{Pointwise density estimation}\label{Sec:MSE} We proceed to present some new results. Namely the pointwise estimation of densities enjoying H\"{o}lder regularity.

One of the main ways to measure the accuracy of the estimator $\hat{f}_{n,h}(x)$ at a given point $x\in\cM$ is by the \textit{mean squared error} (MSE):
\begin{equation}
\text{MSE}=\text{MSE}(\hat{f}_{n,h}(x)):=\mathbb{E}\big[\big(\hat{f}_{n,h}(x)-f(x)\big)^2\big],\quad x\in \cM,
\end{equation}
where $\mathbb{E}$ is the expectation of $(X_1,\dots,X_n),$ i.e.
\begin{eqnarray}\label{MSE1}
\text{MSE}&=&\mathbb{E}\big[\big(\hat{f}_{n,h}(x)-f(x)\big)^2\big]
\nonumber
\\
&=&\int_M\cdots\int_M \big(\hat{f}_{n,h}(x;x_1,\dots,x_n)-f(x)\big)^2 f(x_1)\cdots f(x_n)d\mu(x_1)\cdots d\mu(x_n).
\end{eqnarray}

What we are called to do is to determine the proper assumptions on the symbols $k$, so that the MSE of the corresponding kernel density estimator $\hat{f}_{n,h}$ to be optimally estimated, provided that the unknown density $f$ belongs to a certain H\"{o}lder space.

The main new result of this paper is the following:
\begin{theorem}\label{Th:MSEbound}
Let $s>0,$ $f\in L^{\infty}\cap\dot{\cH}^s$ and a symbol $k \in \cC^{\tau}(\mathbb{R}_+)$ for some $\tau>d+s$, satisfying: $k(0)=1$,
\begin{equation}
\label{moments}
k^{(\nu)}(0)=0,\quad\text{for every}\;\;1\le\nu\le\tau,
\end{equation}
and for some $r> \tau+d$,
\begin{equation}\label{asPV2}
|k^{(\nu)}(\lambda)|\leq C_{\tau}(1+\lambda)^{-r},\quad \text{for every}\;\;\lambda\ge0,\;0\leq \nu\leq \tau.
\end{equation}

%\begin{equation}\label{asPV1}
%k^{(2\nu+1)}(0)=0,\quad \text{for every}\;\;1\leq 2\nu+1\leq \tau.
%\end{equation}
We pick $h=h_n=n^{-\frac{1}{2s+d}}$. Then for every $n\in\bN$ the corresponding kde $\hat{f}_{n,h}$ satisfies
\begin{equation}\label{Th1}
\sup\limits_{x\in \cM} {\rm MSE}(\hat{f}_{n,h}(x))=\sup\limits_{x\in \cM} \mathbb{E}\big[\big(\hat{f}_{n,h}(x)-f(x)\big)^2\big]\leq c C(f) n^{-\frac{2s}{2s+d}},
\end{equation}
where the constant $c>0$, depends only on $\tau,\;s,\;C_{\tau}$ and the structural constants of the setting, while $C(f)$ is given by
\begin{equation}\label{C(f)}
C(f):=\max\big(\|f\|_{\infty},\|f\|_{\dot{\cH}^s}^2\big).
\end{equation}
\end{theorem}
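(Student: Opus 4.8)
The plan is to use the classical bias--variance decomposition of the MSE and to estimate the two terms separately, exploiting the kernel decay \eqref{kernel-decay-D}, the H\"older regularity of $f$, and the moment conditions \eqref{moments} on the symbol $k$.

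\textbf{Step 1: Bias--variance split.} For fixed $x$, write
\[
\mathrm{MSE}(\hat f_{n,h}(x))=\bigl(\bE\hat f_{n,h}(x)-f(x)\bigr)^2+\mathrm{Var}\bigl(\hat f_{n,h}(x)\bigr).
\]
Since the $X_i$ are iid, $\bE\hat f_{n,h}(x)=\int_\cM\cK_h(x,y)f(y)\,d\mu(y)=:K_hf(x)$, and the variance is $\tfrac1n\mathrm{Var}(\cK_h(X_1,x))\le \tfrac1n\bE[\cK_h(X_1,x)^2]=\tfrac1n\int_\cM\cK_h(x,y)^2f(y)\,d\mu(y)$.

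\textbf{Step 2: Variance bound.} Using $\|f\|_\infty$ and the pointwise bound $|\cK_h(x,y)|\le c\,C_\tau\,\cD_{h,\tau}(x,y)=c\,C_\tau\,h^{-d}(1+h^{-1}\rho(x,y))^{-\tau}$ from Theorem \ref{thm:S-local-kernels}, I get
\[
\mathrm{Var}(\hat f_{n,h}(x))\le\frac{\|f\|_\infty}{n}\int_\cM \cK_h(x,y)^2\,d\mu(y)\le \frac{c\,C_\tau^2\,\|f\|_\infty}{n}\,h^{-2d}\int_\cM\bigl(1+h^{-1}\rho(x,y)\bigr)^{-2\tau}d\mu(y).
\]
The last integral is $O(h^d)$: split $\cM$ into the annuli $2^jh\le\rho(x,y)<2^{j+1}h$, use Ahlfors regularity \eqref{Ahlfors} to bound each annulus' measure by $c(2^{j+1}h)^d$, and sum the geometric series $\sum_j 2^{jd}2^{-2\tau j}$, which converges since $\tau>d$. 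Hence $\mathrm{Var}(\hat f_{n,h}(x))\le c\,C_\tau^2\|f\|_\infty\,(nh^d)^{-1}$, uniformly in $x$.

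\textbf{Step 3: Bias bound.} This is the main obstacle and where the moment conditions \eqref{moments} and the H\"older space Definition \ref{D:HS} enter. Since $k(0)=1$, \eqref{Th2.2eq1} gives $\int_\cM\cK_h(x,y)\,d\mu(y)=1$, so
\[
K_hf(x)-f(x)=\int_\cM\cK_h(x,y)\bigl(f(y)-f(x)\bigr)d\mu(y).
\]
Write $\ell=\lfloor s\rfloor$. The strategy is a Taylor-type expansion adapted to the operator $L$: because $k^{(\nu)}(0)=0$ for $1\le\nu\le\tau$, the operator $K_h$ annihilates the ``low-order'' part of $f$ in a suitable sense, and one should show
\[
K_hf(x)-f(x)=\text{(terms involving }L^{j/2}f,\ 1\le j\le\ell)\ +\ \text{(remainder)},
\]
where the lower-order terms vanish by the moment conditions (after expressing $\cK_h=k_h(\sqrt L)$ spectrally and using $k_h^{(\nu)}(0)=h^\nu k^{(\nu)}(0)=0$), and the remainder is controlled by $\|f\|_{\dot\cH^s}$ together with the kernel decay. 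Concretely, the expected bound is $|K_hf(x)-f(x)|\le c\,\|f\|_{\dot\cH^s}\,h^{s}\cdot\bigl(h^{-d}\int_\cM(1+h^{-1}\rho(x,y))^{-\tau}\rho(x,y)^{\,s-\ell}\cdots\bigr)$, and the moment integrals $h^{-d}\int_\cM(1+h^{-1}\rho)^{-\tau}\rho^{\,a}\,d\mu(y)=O(h^a)$ for $0\le a< \tau-d$ are handled by the same dyadic-annulus/Ahlfors argument as in Step 2 (convergence needs $\tau>d+s\ge d+a$). The delicate point is making the ``operator Taylor expansion'' rigorous in the abstract metric setting: one has to replace ordinary Taylor polynomials by an argument on the spectral side, writing $(k_h(\sqrt L)-\mathrm{Id})f$ via functional calculus, peeling off $\ell$ terms corresponding to $L^{1/2}f,\dots,L^{\ell/2}f$, and estimating the final difference using \eqref{Ho1}. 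I expect this to be done by iterating the one-step identity $K_h g-g(0\text{-order part})=\int(\text{kernel})(g(y)-g(x))$ on $g=L^{j/2}f$, or by invoking a reproduction/near-reproduction property of the family $\{\cK_h\}$ from the cited functional calculus in \cite{CKP,KP}.

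\textbf{Step 4: Optimization.} Combining Steps 2 and 3,
\[
\mathrm{MSE}(\hat f_{n,h}(x))\le c\Bigl(\|f\|_{\dot\cH^s}^2\,h^{2s}+\|f\|_\infty\,(nh^d)^{-1}\Bigr)\le c\,C(f)\Bigl(h^{2s}+(nh^d)^{-1}\Bigr),
\]
uniformly in $x\in\cM$, with $C(f)=\max(\|f\|_\infty,\|f\|_{\dot\cH^s}^2)$. Choosing $h=h_n=n^{-1/(2s+d)}$ balances the two terms, since then $h^{2s}=n^{-2s/(2s+d)}$ and $(nh^d)^{-1}=n^{-1}n^{d/(2s+d)}=n^{-2s/(2s+d)}$, giving $\sup_x\mathrm{MSE}(\hat f_{n,h}(x))\le c\,C(f)\,n^{-2s/(2s+d)}$, which is \eqref{Th1}. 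Tracking constants shows the dependence is only on $\tau,s,C_\tau$ and the structural constants $c_1,c_2,c_3,\alpha,d$, as claimed.
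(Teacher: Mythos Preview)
Your Steps 1, 2 and 4 are correct and essentially identical to the paper's argument (the paper packages the annulus computation as Lemma~\ref{L:tech} and the variance bound as Proposition~\ref{P:V}). The gap is in Step~3, and you have identified it yourself: the ``operator Taylor expansion'' is not carried out. For $0<s\le 1$ (i.e.\ $\ell=0$) your identity $K_hf(x)-f(x)=\int\cK_h(x,y)(f(y)-f(x))\,d\mu(y)$ combined with \eqref{Ho1} and the annulus estimate already gives $|b(x)|\le c\|f\|_{\dot\cH^s}h^s$, so that case is complete. But for $s>1$ the H\"older condition controls $|L^{\ell/2}f(y)-L^{\ell/2}f(x)|$, not $|f(y)-f(x)|$, and you have not explained how to pass from $(K_h-I)f$ to an expression involving $L^{\ell/2}f$. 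The suggestions you make (iterating the one-step identity on $g=L^{j/2}f$, or invoking an unspecified reproduction property) are not concrete enough to constitute a proof; in particular it is not clear how the vanishing derivatives $k^{(\nu)}(0)=0$ would be used in an iteration scheme to produce the correct power $h^s$.

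The paper closes this gap by a Littlewood--Paley decomposition on the spectral side. One fixes $i$ with $2^{-i}\le h<2^{-i+1}$, writes $f=\Psi_{2^{-i}}f+\sum_{j>i}\Phi_{2^{-j}}f$ for smooth cut-offs $\psi,\varphi$, and introduces the auxiliary symbols
\[
g^j(\lambda)=\frac{(k(h2^j\lambda)-1)\,\varphi(\lambda)}{\lambda^{\ell}}\qquad(j\ge i,\ \text{with }\psi\text{ in place of }\varphi\text{ when }j=i).
\]
The division by $\lambda^{\ell}$ is precisely the ``peeling off'' you allude to: it yields the operator identity $(K_h-I)\Phi_{2^{-j}}f=2^{-\ell j}G^j_{2^{-j}}L^{\ell/2}f$, and the moment conditions \eqref{moments} are exactly what make $g^i$ smooth at $0$ so that Theorem~\ref{thm:S-local-kernels} applies to each $g^j$ with bounds uniform in $j$. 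Since $g^j(0)=0$, one can subtract $L^{\ell/2}f(x)$ inside the integral, apply \eqref{Ho1}, and sum the resulting geometric series in $2^{-sj}$ to obtain $|b(x)|\le C_2\|f\|_{\dot\cH^s}h^s$. This is the missing mechanism in your Step~3.
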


\vspace{0.2cm}

While approaching the proof of Theorem \ref{Th:MSEbound} we will have the opportunity to present the action on the setting and highlighting the correspondence with the classical Euclidean framework.

We first take a closer look at the assumptions on the symbol generating the kernels. We restrict ourselves to the example of $\cM=\bR^d$. As we saw in Example \ref{exampleRn}, the radial extension $\tilde{k}$ of the symbol $k$ is the Fourier transform of the function $\kappa$, which yields the usual kde as in (\ref{kdeRn}). Translating the assumptions of Theorem \ref{Th:MSEbound} in the Fourier transform language we get the usual assumptions on the $\kappa$ for Euclidean spaces;

$(\alpha)$ Assumption (\ref{moments}), means simply that the $\kappa$ enjoys vanishing moments up to some certain order.

$(\beta)$ Assumption (\ref{asPV2}), thanks to Theorem \ref{thm:S-local-kernels} and (\ref{kernelonRd}), ensures that 
\begin{align}
\int_{\bR^d}(1+|\xi|)^s|\kappa(\xi)|d\xi&=\int_{\bR^d}(1+|\xi|)^s|\cK(\xi,0)|d\xi
\le c\int_{\bR^d}(1+|\xi|)^s\cD_{1,\tau}(\xi,0)d\xi
\nonumber
\\
&=c\int_{\bR^d}(1+|\xi|)^{-(d+\varepsilon)}d\xi,\quad \varepsilon:=\tau-d-s>0
\nonumber
\\
&=c_d\int_{0}^{\infty}\frac{\varrho^{d-1}d\varrho}{(1+\varrho)^{d+\varepsilon}}\quad\text{(polar coordinates)}
\nonumber
\\
&\le c\int_{0}^{\infty}\frac{d\varrho}{(1+\varrho)^{1+\varepsilon}}<\infty.
\label{finiteorder}
\end{align}

$(\gamma)$ The assumption $k(0)=1$, simply asserts that
$$\int_{\bR^d}\kappa(\xi)d\xi=\hat{\kappa}(0)=\tilde{k}(0)=k(0)=1.$$
\vspace{0.2cm}

The standard approach for dealing with the MSE is to decompose it as follows:
\begin{equation}\label{MSE2}
\text{MSE}(\hat{f}_{n,h}(x))=\sigma^2 (x)+b^2 (x)
\end{equation}
where the function $\sigma^2 (x)$ is the \textit{variance} of the estimator $\hat{f}_{n,h}(x)$, i.e.
\begin{equation}\label{V1}
\sigma^2(x):=\mathbb{E}\big[\big(\hat{f}_{n,h}(x)-\mathbb{E}[\hat{f}_{n,h}(x)]\big)^2\big],\quad x\in \cM
\end{equation}
and $b(x)$ is the \textit{bias} of $\hat{f}_{n,h}(x),$ i.e.
\begin{equation}\label{B1}
b(x):=\mathbb{E}\big[\hat{f}_{n,h}(x)\big]-f(x), \quad x\in \cM.
\end{equation}

We will separate the proof of Theorem \ref{Th:MSEbound} in the two usual steps: the estimation of the variance and the estimation of the bias, but before the proof, we provide some remarks below.

\begin{remark} 
$(\alpha)$ Another form for the conclusion of Theorem \ref{Th:MSEbound} is
\begin{equation}\label{Th11}
\sup_{f\in \mathbb{F}^{s}(m)}\sup\limits_{x\in M} \mathbb{E}\Big[\big(\hat{f}_{n,h}(x)-f(x)\big)^2\Big]\leq cn^{-\frac{2s}{2s+d}},
\end{equation}
where $\mathbb{F}^{s}(m):=\{f\in L^{\infty}\cap \dot{\cH}^{s}: \|f\|_{\infty}\le m\;\text{and}\;\|f\|_{\dot{\cH}^{s}}\le m\}$, $m>0$, and the constant $c>0$ depends also on $m>0$.

$(\beta)$ For latter use we state that our choice of $h_n$ gives $h_n\rightarrow0$ and $nh_n^d\rightarrow \infty$ when $n\rightarrow \infty$.

$(\gamma)$ If the symbol $k$ is compactly supported, then it obviously satisfies (\ref{asPV2}); see Remark \ref{Remark:FC} $(\beta)$.

$(\delta)$ The rate obtained in Theorem \ref{Th:MSEbound} is the optimal one (see e.g. \cite{Tsybakov}).
\end{remark}

The following simple inequality is established in \cite{CKP} under more general assumptions. Here we express it for Ahlfors regular spaces and we give its proof for having the opportunity to present some first calculations on metric spaces:
\begin{lemma}\label{L:tech}If $\tau >d$, there exists a constant $c=c_\tau>0$ such that for every $\ddelta>0$ and $x\in \cM$
\begin{equation}\label{tech-1}
I_{h,\tau}(x):=\int_{\cM} \big(1+\ddelta^{-1}\rho(x, y)\big)^{-\tau} d\mu(y) \le ch^{d}.
\end{equation}
\end{lemma}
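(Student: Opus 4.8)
The plan is to reduce the integral over the metric space to a sum over dyadic annuli and then apply Ahlfors regularity \eqref{Ahlfors} on each annulus. First I would partition $\cM$ into the ball $A_0 := B(x,\ddelta)$ and the dyadic shells $A_j := B(x, 2^j\ddelta) \setminus B(x, 2^{j-1}\ddelta)$ for $j \ge 1$, so that $\cM = \bigcup_{j \ge 0} A_j$. On $A_0$ the integrand is bounded by $1$, and by \eqref{Ahlfors} we have $|A_0| = |B(x,\ddelta)| \le c_1 \ddelta^d$. On $A_j$ with $j \ge 1$ we have $\rho(x,y) \ge 2^{j-1}\ddelta$, hence $(1 + \ddelta^{-1}\rho(x,y))^{-\tau} \le (1 + 2^{j-1})^{-\tau} \le c\, 2^{-j\tau}$; and since $A_j \subset B(x, 2^j\ddelta)$, again \eqref{Ahlfors} gives $|A_j| \le |B(x, 2^j\ddelta)| \le c_1 (2^j\ddelta)^d = c_1 2^{jd}\ddelta^d$.

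Putting these together yields
\begin{equation*}
I_{\ddelta,\tau}(x) \le \sum_{j=0}^{\infty} \int_{A_j} \big(1 + \ddelta^{-1}\rho(x,y)\big)^{-\tau}\, d\mu(y) \le c_1\ddelta^d + \sum_{j=1}^{\infty} c\, 2^{-j\tau} \cdot c_1 2^{jd}\ddelta^d = c\,\ddelta^d \sum_{j=0}^{\infty} 2^{-j(\tau - d)}.
\end{equation*}
Since $\tau > d$, the geometric series $\sum_{j \ge 0} 2^{-j(\tau-d)}$ converges to a constant $c_\tau$ depending only on $\tau$ and $d$, which gives the claimed bound $I_{\ddelta,\tau}(x) \le c_\tau \ddelta^d$ uniformly in $x \in \cM$ and $\ddelta > 0$. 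Note the statement as written has a typo ($h$ versus $\ddelta$), but the argument is indifferent to the name of the scale parameter.

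There is no real obstacle here; the only point requiring a little care is the uniformity in $x$, which is automatic because the Ahlfors bound \eqref{Ahlfors} holds with the same constant $c_1$ at every center $x$ and every radius. One could alternatively phrase the same computation via the layer-cake (distribution function) formula, writing the integral as $\int_0^\infty \mu\{y : (1+\ddelta^{-1}\rho(x,y))^{-\tau} > t\}\, dt$ and using $\mu\{y : \rho(x,y) < R\} \le c_1 R^d$; the dyadic decomposition is cleaner and is exactly the kind of first calculation on a metric space that the remark preceding the lemma promises to illustrate.
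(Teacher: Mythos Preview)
Your proof is correct and essentially identical to the paper's: the same dyadic-annulus decomposition $B(x,h)\cup\bigcup_{j\ge 1}\big(B(x,2^{j}h)\setminus B(x,2^{j-1}h)\big)$, the same use of Ahlfors regularity on each shell, and the same geometric series $\sum_{j\ge 0}2^{-j(\tau-d)}$. One small remark: there is no typo in the statement---the macro \verb|\ddelta| is defined in the preamble as \texttt{h}, so the scale parameter is consistently~$h$ throughout.
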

\begin{proof}
We split the metric space as $$\cM=\bigcup_{\nu=0}^{\infty}M_{\nu},$$
where $M_0:=B(x,h)$ and $M_{\nu}:=B(x,2^{\nu}h)\setminus B(x,2^{\nu-1}h)$, for every $\nu\in\bN$.

Then 
$$I_{h,\tau}(x)=\sum_{\nu=0}^{\infty}\int_{M_{\nu}}\big(1+\ddelta^{-1}\rho(x, y)\big)^{-\tau} d\mu(y).$$

Of course,
$$\int_{M_{0}}\big(1+\ddelta^{-1}\rho(x, y)\big)^{-\tau}\le|B(x,h)|\le c_1 h^{d},$$
thanks to (\ref{Ahlfors}).

Let $\nu\in\bN$ and $y\in M_{\nu}\subset B(x,2^{\nu-1}h)^c$. Then $1+\ddelta^{-1}\rho(x, y)\ge 1+2^{\nu-1}> 2^{\nu-1}$. This together with (\ref{Ahlfors}) implies that
\begin{align*}\int_{M_{\nu}}\big(1+\ddelta^{-1}\rho(x, y)\big)^{-\tau} d\mu(y)&\le 2^{-(\nu-1)\tau}|M_{\nu}|\le 2^{\tau}2^{-\nu\tau}|B(x,2^{\nu}h)|
\\
&\le c_1 2^{\tau}2^{-\nu(\tau-d)}h^{d}.
\end{align*}

Combining all the above and since we assumed that $\tau>d$ we conclude to
\begin{align*}
I_{h,\tau}(x)&\le c_1 2^{\tau}\sum_{\nu=0}^{\infty}2^{-\nu(\tau-d)}h^{d}=c_1 2^{\tau}\frac{1}{1-2^{-\tau+d}}h^{d}
\\
&=c_1\frac{2^{2\tau}}{2^{\tau}-2^{d}}h^{d}=:c_{\tau}h^{d}.
\end{align*}
\end{proof}
\vspace{0.2cm}

Let us point out that the above estimate is sharp in the sense that
\begin{equation}
I_{h,\tau}(x)\ge \int_{M_{0}}\big(1+\ddelta^{-1}\rho(x, y)\big)^{-\tau} d\mu(y)\ge \frac{2^{-\tau}}{c_1}h^{d},
\end{equation}
thanks to (\ref{Ahlfors}). 

It is well known that such an integral is classical on the Euclidean space and can be handled using (generalized) polar coordinates, exactly as we did in (\ref{finiteorder}). On an abstract Ahlfors regular metric space it can be sharply estimated as above.  

\subsection{Estimation of the variance}
We proceed to the first step of the proof of Theorem \ref{Th:MSEbound}. We will estimate the variance of bounded densities. As always there is not any regularity required. The main tools are Theorem \ref{thm:S-local-kernels} and Lemma \ref{L:tech}.

\begin{proposition}\label{P:V}
Let $f\in L^{\infty}$, $\tau>d$ and a multiplier $k\in \cC^{\tau}(\mathbb{R}_+)$ satisfying (\ref{moments}) and (\ref{asPV2}). Then for every $x\in \cM$, $0<h\leq 1$ and $n\in\mathbb{N}$
\begin{equation}\label{EstV}
\sigma^2(x)\leq \frac{C_1}{nh^d}\|f\|_{\infty},
\end{equation}
where the constant $C_1>0$ depends only on $\tau,\;C_\tau$ and the structural constants.
\end{proposition}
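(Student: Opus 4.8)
The plan is to exploit the standard i.i.d.\ structure of the estimator together with the pointwise kernel bound \eqref{kernel-decay-D} and the integral estimate of Lemma \ref{L:tech}. Since $\hat f_{n,h}(x)=\frac1n\sum_{i=1}^n \cK_h(X_i,x)$ is an average of i.i.d.\ random variables, its variance is $\sigma^2(x)=\frac1n\,\mathrm{Var}\big(\cK_h(X_1,x)\big)\le \frac1n\,\bE\big[\cK_h(X_1,x)^2\big]$. So the whole problem reduces to bounding the single integral
\begin{equation*}
\bE\big[\cK_h(X_1,x)^2\big]=\int_\cM \cK_h(x,y)^2 f(y)\,d\mu(y)\le \|f\|_\infty \int_\cM \cK_h(x,y)^2\,d\mu(y),
\end{equation*}
using symmetry $\cK_h(X_1,x)=\cK_h(x,X_1)$ and the fact that $f$ is a density so $X_1$ has law $f\,d\mu$.

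Next I would invoke Theorem \ref{thm:S-local-kernels}: the hypotheses on $k$ in Proposition \ref{P:V} (namely $k\in\cC^\tau$, $\tau>d$, the decay \eqref{asPV2} with $r>\tau+d$, and the vanishing-derivative condition \eqref{moments}, which in particular gives the odd-derivative vanishing required) are exactly what is needed to apply it with $\mm=\tau$. Hence $K_h$ is an integral operator and
\begin{equation*}
\big|\cK_h(x,y)\big|\le cC_\tau\,\cD_{h,\tau}(x,y)=cC_\tau\,h^{-d}\big(1+h^{-1}\rho(x,y)\big)^{-\tau}.
\end{equation*}
Squaring,
\begin{equation*}
\cK_h(x,y)^2\le c^2C_\tau^2\,h^{-2d}\big(1+h^{-1}\rho(x,y)\big)^{-2\tau}\le c^2C_\tau^2\,h^{-2d}\big(1+h^{-1}\rho(x,y)\big)^{-\tau},
\end{equation*}
the last step because $(1+t)^{-2\tau}\le(1+t)^{-\tau}$ for $t\ge0$ and $\tau>0$. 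Now I apply Lemma \ref{L:tech} (valid since $\tau>d$) to get
\begin{equation*}
\int_\cM \big(1+h^{-1}\rho(x,y)\big)^{-\tau}\,d\mu(y)\le c_\tau h^d,
\end{equation*}
so that $\int_\cM \cK_h(x,y)^2\,d\mu(y)\le c^2C_\tau^2 c_\tau\,h^{-2d}\cdot h^{d}=C_1' h^{-d}$ with $C_1'$ depending only on $\tau$, $C_\tau$ and the structural constants. Combining the pieces,
\begin{equation*}
\sigma^2(x)\le \frac1n\,\bE\big[\cK_h(X_1,x)^2\big]\le \frac{\|f\|_\infty}{n}\int_\cM \cK_h(x,y)^2\,d\mu(y)\le \frac{C_1}{nh^d}\|f\|_\infty,
\end{equation*}
uniformly in $x\in\cM$, which is \eqref{EstV}. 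The restriction $0<h\le1$ is harmless here (it is not even needed for the variance bound, but it is carried for consistency with the bias estimate).

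There is no real obstacle in this argument; it is the ``easy half'' of the MSE bound. The only points requiring a little care are: (i) checking that the hypotheses of Proposition \ref{P:V} genuinely imply those of Theorem \ref{thm:S-local-kernels} with $\mm=\tau$ — in particular that \eqref{moments} covers the odd-derivative-vanishing condition $k^{(2\nu+1)}(0)=0$ for $1\le 2\nu+1\le\tau$, which it does since \eqref{moments} kills \emph{all} derivatives of order $1$ through $\tau$; and (ii) the trivial monotonicity step $(1+t)^{-2\tau}\le(1+t)^{-\tau}$ used to pass from the squared kernel to a single power so that Lemma \ref{L:tech} applies directly. Everything else is bookkeeping of constants, all of which depend only on $\tau$, $C_\tau$, and $c_1,c_2,c_3,\alpha,d$.
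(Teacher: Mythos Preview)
Your proof is correct and follows essentially the same route as the paper: reduce the variance of the i.i.d.\ average to $\frac1n\bE[\cK_h(X_1,x)^2]$, bound $f$ by $\|f\|_\infty$, apply the kernel decay from Theorem~\ref{thm:S-local-kernels}, and finish with Lemma~\ref{L:tech}. The only cosmetic difference is that the paper applies Lemma~\ref{L:tech} directly with exponent $2\tau$ (writing $I_{h,2\tau}$), whereas you first use $(1+t)^{-2\tau}\le(1+t)^{-\tau}$ and then apply the lemma with exponent $\tau$; both are equally valid since $\tau>d$.
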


\begin{proof} Recalling the results expanded in \S \ref{kdemetric}, the spectral multiplier $K_h$ associated with the dilated symbol $k_h(\lambda)=k(h\lambda)$, is an integral operator with kernel $\cK_h(x,y)$, $x,y\in\cM$.

We introduce the random variables
\begin{equation}\label{etai}
\eta_i(x):=\cK_h(X_i,x)-\mathbb{E}\big[\cK_h(X_i,x)\big],\quad x\in \cM,\;i=1,\dots,n
\end{equation}
and we observe that $\eta_1(x),\dots,\eta_n(x)$ are iid random variables with $\mathbb{E}[\eta_i(x)]=0,$ for $i=1,\dots,n$. For their variance we have
\begin{eqnarray}\label{varfirst}
\mathbb{E}[\eta_i^2(x)]&=& \mathbb{E}\big[\big(\cK_h(X_i,x)\big)^2\big]-
\left(\mathbb{E}\big[\cK_h(X_i,x)\big]\right)^2\\ \nonumber
&\leq& \mathbb{E}\big[\big(\cK_h(X_i,x)\big)^2\big] \\ \nonumber
&=&\int_\cM |\cK_h(x,y)|^2 f(y) d\mu(y).
\end{eqnarray}

By Theorem \ref{thm:S-local-kernels} we have the certain bounds
\begin{equation}
\label{KernelvariancewithAR}
|\cK_h(x,y)|\le cC_{\tau}\cD_{h,\tau}(x,y).
\end{equation}

Since $f\in L^{\infty}$ we derive
\begin{eqnarray}\label{ppV1}
\mathbb{E}[\eta_i^2(x)]&\leq& c \|f\|_{\infty} h^{-2d}\int_\cM\big(1+h^{-1}\rho(x,y)\big)^{-2\tau}d\mu(y)\\ \nonumber
&=&c \|f\|_{\infty} h^{-2d}I_{h,2\tau}\leq C_1 \|f\|_{\infty} h^{-d},
\end{eqnarray}
where for the ultimate inequality we used (\ref{tech-1}). 
%The third inequality holds true thanks to (\ref{DNC}).

We observe that 
\begin{equation}\label{ppV2}
\sum\limits_{i=1}^n \eta_i (x)= n\big(\hat{f}_{n,h}(x)-\mathbb{E}\big[\hat{f}_{n,h}(x)\big]\big).
\end{equation}
Bearing in mind that the independent random variables $\eta_i$ have zero mean and guided by (\ref{ppV1}) and (\ref{ppV2}) we arrive at
\begin{eqnarray}\label{varlast}
\sigma^2(x)&=&\mathbb{E}\big[\big(\hat{f}_{n,h}(x)-\mathbb{E}[\hat{f}_{n,h}(x)]\big)^2\big]
=\mathbb{E}\Big[\Big(\frac{1}{n} \sum\limits_{i=1}^n \eta_i (x)\Big)^2\Big]\\ \nonumber
&=&\frac{1}{n^2} \sum\limits_{i=1}^n\mathbb{E}\big[\eta_i^2(x)\big]
\leq C_1 \|f\|_{\infty}\frac{1}{nh^d}.
\end{eqnarray}
\end{proof}

\subsection{Estimation of the bias}
We will estimate the bias under the assumption that the pdf $f$ lies in the H\"{o}lder space.
\begin{proposition}\label{P:B}
Let $s>0,\;f\in L^{\infty}\cap\dot{\cH}^s$ and a multiplier $k\in\cC^{\tau}(\mathbb{R}_+)$, for some$\;\tau>d+s$, satisfying $k(0)=1$, (\ref{moments}) and (\ref{asPV2}). Then for every $x\in \cM$, $0<h\leq 1$ and $n\in\mathbb{N}$,
\begin{equation}\label{EstB}
|b(x)|\leq C_2 \|f\|_{\dot{\cH}^s} h^s,
\end{equation}
where the constant $C_2>0$ depends only on $s,\;\tau,\;C_{\tau}$ and the structural constants of the setting.
\end{proposition}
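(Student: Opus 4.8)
The plan is to exploit the two key structural facts: the Markov-type identity (\ref{Th2.2eq1}), which for our normalized symbol reads $\int_\cM \cK_h(x,y)\,d\mu(y)=k(0)=1$, and the decay estimate (\ref{kernel-decay-D}). First I would write the bias as an integral against the kernel: since $\bE[\hat f_{n,h}(x)]=\bE[\cK_h(X_1,x)]=\int_\cM \cK_h(y,x)f(y)\,d\mu(y)$ and $\cK_h$ is symmetric, we get
\begin{equation*}
b(x)=\int_\cM \cK_h(x,y)\big(f(y)-f(x)\big)\,d\mu(y),
\end{equation*}
using $\int \cK_h(x,y)\,d\mu(y)=1$ to subtract off $f(x)$. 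This is exactly the classical starting point on $\bR^d$; the novelty is to run the argument with only the abstract Hölder condition (\ref{Ho1}) available in place of a Taylor expansion.

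The core difficulty — and the step I expect to be the main obstacle — is connecting the ``$f(y)-f(x)$'' difference to $\|f\|_{\dot\cH^s}$ when $\ell=\lfloor s\rfloor\ge 1$, because then (\ref{Ho1}) controls $L^{\ell/2}f$, not $f$ itself, so a bare difference $f(y)-f(x)$ is not directly estimable. The natural remedy is to pass through the functional calculus: write a suitable parametrix/reproducing identity for $f$ in terms of $L^{\ell/2}f$ against heat-kernel-type building blocks, or equivalently decompose the kernel $\cK_h$ using the vanishing-derivative conditions (\ref{moments}). Concretely, since $k(0)=1$ and $k^{(\nu)}(0)=0$ for $1\le\nu\le\tau$, one expects that the symbol $k(\lambda)$ can be factored, modulo rapidly decaying remainders, so that $\cK_h$ acts on $f$ the same way a smoothing kernel of ``order $\ell$'' does. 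I would therefore introduce the auxiliary symbol $\widetilde k(\lambda):=\lambda^{-\ell}(k(\lambda)-1)$ near $0$ (it is $\cC^{\tau-\ell}$ and still satisfies a decay bound of the type (\ref{asPV2}) with exponent lowered by $\ell$, by Taylor's theorem applied to $k$ at $0$ using (\ref{moments})), so that its spectral multiplier applied to $L^{\ell/2}f$ reproduces $K_h f - f$. Applying Theorem \ref{thm:S-local-kernels} to $\widetilde k$ (whose smoothness order $\tau-\ell>d+s-\ell\ge d$ suffices since $\ell\le s$) gives an integral operator $\widetilde K_h$ with kernel obeying $|\widetilde{\cK}_h(x,y)|\le c\,C_\tau\, h^{\ell}\,\cD_{h,\tau-\ell}(x,y)$ — the extra factor $h^\ell$ coming from the dilation and the $\lambda^{-\ell}$, and one must track it carefully.

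With that reduction in hand, the estimate becomes routine. We obtain
\begin{equation*}
b(x)=h^{\ell}\int_\cM \widetilde{\cK}'_h(x,y)\big(L^{\ell/2}f(y)-L^{\ell/2}f(x)\big)\,d\mu(y)
\end{equation*}
for an appropriate kernel $\widetilde{\cK}'_h$ still of the form $c\,C_\tau\,\cD_{h,\tau-\ell}(x,y)$ with $\int\widetilde{\cK}'_h(x,y)\,d\mu(y)$ bounded (again using a vanishing-moment/Markov property for $\widetilde k$), and then bound $|L^{\ell/2}f(y)-L^{\ell/2}f(x)|\le\|f\|_{\dot\cH^s}\rho(x,y)^{s-\ell}$ by (\ref{Ho1}). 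Plugging in $\cD_{h,\tau-\ell}(x,y)=h^{-d}(1+h^{-1}\rho(x,y))^{-(\tau-\ell)}$ and substituting $\rho(x,y)^{s-\ell}=h^{s-\ell}(h^{-1}\rho(x,y))^{s-\ell}$, the integral reduces — after the dyadic annular decomposition used in Lemma \ref{L:tech}, now with the extra polynomial weight $(h^{-1}\rho(x,y))^{s-\ell}$ absorbed into the decay since $\tau-\ell-(s-\ell)=\tau-s>d$ — to
\begin{equation*}
|b(x)|\le c\,C_\tau\,\|f\|_{\dot\cH^s}\, h^{\ell}\cdot h^{s-\ell}\cdot h^{-d}\int_\cM\big(1+h^{-1}\rho(x,y)\big)^{-(\tau-s)}d\mu(y)\le C_2\,\|f\|_{\dot\cH^s}\,h^{s},
\end{equation*}
the last step being precisely Lemma \ref{L:tech} with exponent $\tau-s>d$. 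The cases $0<s<1$ (so $\ell=0$) and $s\ge 1$ are handled uniformly by this scheme, with $\ell=0$ giving the direct computation sketched at the outset. I would also double-check the edge behaviour of $\widetilde k$ at $\lambda=0$ (smoothness, evenness of derivatives so that the hypothesis $k^{(2\nu+1)}(0)=0$ of Theorem \ref{thm:S-local-kernels} still holds for $\widetilde k$) — this bookkeeping around $0$, together with verifying the hypotheses of Theorem \ref{thm:S-local-kernels} for the reduced symbol, is where the real work lies; everything after that is the dyadic estimate already rehearsed in Lemma \ref{L:tech}.
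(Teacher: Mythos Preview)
Your overall strategy---factor $K_h-I$ through $L^{\ell/2}$ via an auxiliary symbol, then invoke the H\"older condition---is the right idea and matches the paper's approach in spirit. But there is a genuine obstruction when you apply Theorem~\ref{thm:S-local-kernels} to $\widetilde k(\lambda)=\lambda^{-\ell}(k(\lambda)-1)$: the decay hypothesis (\ref{decay}) fails at infinity, not at the origin. Since $k(\lambda)\to 0$, one has $k(\lambda)-1\to -1$ and hence $\widetilde k(\lambda)\sim -\lambda^{-\ell}$ as $\lambda\to\infty$. Theorem~\ref{thm:S-local-kernels} with smoothness parameter $m$ demands decay $(1+\lambda)^{-r}$ with $r>m+d$; for the final integral to converge after absorbing the weight $\rho(x,y)^{s-\ell}$ you also need $m>d+s-\ell$. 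Together these force $\ell>2d+s-\ell$, i.e.\ $2\ell>2d+s$, which is impossible whenever $s\le 2d$ (since $\ell<s$) and fails for many larger $s$ as well. The smoothness and the vanishing of derivatives at $0$ are indeed fine, as you suspected; it is the tail that breaks the argument.

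The paper repairs this by inserting a Littlewood--Paley decomposition before dividing by $\lambda^\ell$. One picks $i$ with $2^{-i}\le h<2^{-i+1}$, writes $f=\Psi_{2^{-i}}f+\sum_{j>i}\Phi_{2^{-j}}f$ with smooth compactly supported $\psi,\varphi$, and sets
\[
g^j(\lambda)=\frac{(k(h2^j\lambda)-1)\,\varphi(\lambda)}{\lambda^\ell}
\]
(and the analogous expression with $\psi$ for $j=i$). These $g^j$ are \emph{compactly supported}, so they trivially satisfy (\ref{decay}), and (\ref{moments}) together with (\ref{asPV2}) make the bounds uniform in $j$. This yields $b(x)=\sum_{j\ge i}2^{-\ell j}G^j_{2^{-j}}(L^{\ell/2}f)(x)$, after which your endgame---subtract $L^{\ell/2}f(x)$ using $g^j(0)=0$, apply (\ref{Ho1}), and invoke Lemma~\ref{L:tech}---runs on each piece to give $|b(x)|\le c\|f\|_{\dot{\cH}^s}\sum_{j\ge i}2^{-sj}\le C_2\|f\|_{\dot{\cH}^s}h^s$. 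Your direct argument (with no auxiliary symbol at all) does go through cleanly in the range $0<s<1$, where $\ell=0$.
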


\begin{proof}
Since $X_i$ are iid with common density $f$, we obtain
\begin{eqnarray}\label{bias1}
b(x)&=&\mathbb{E}\big[\hat{f}_{n,h}(x)\big]-f(x)
\nonumber
\\
&=&\frac{1}{n}\sum\limits_{i=1}^n \mathbb{E}\big[\cK_h(X_i,x)\big]-f(x)
\\ \nonumber
&=&\big(K_h-I\big)f(x),
\end{eqnarray}
where $I$ the identity operator on $\cM$ and $K_h=k_h(\sqrt{L})$ the spectral multiplier associated with the dilated symbol $k_h$ and the operator $\sqrt{L}$, as in \S \ref{kdemetric}.

For the given bandwidth $0<h\le1$ there exists a unique integer $i\in\bN_0$ such that
\begin{equation}\label{i}
2^{-i}\le h<2^{-i+1}.
\end{equation}
 
We consider the symbol $\psi\in\cC^{\infty}(\bR_+)$ with $\supp\psi\subset[0,2]$, $\psi(\lambda)=1$, for every $\lambda\in[0,1]$ and $0\le \psi(\lambda)\le1$, for every $\lambda\in[0,2]$. 

We set $\varphi(\lambda):=\psi(\lambda)-\psi(2\lambda)$ which is $\cC^{\infty}$ and supported in $[2^{-1},2]$. %We denote by $\varphi_j(\lambda):=\varphi(2^{-j}\lambda)$, for every $j\in\bN$. 

By the construction of the above functions, it turns out that
$$\psi(2^{-i}\lambda)+\sum\limits_{j=i+1}^{\infty}\varphi(2^{-j}\lambda)=1,\;\;\text{for every}\;\lambda\in\bR_+.$$
Then by \cite[Corollary 3.9]{CKP}
\begin{equation}
\label{fsplit}
f=\Psi_{2^{-i}}f+\sum\limits_{j=i+1}^{\infty}\Phi_{2^{-j}}f,
\end{equation}
where by the capital $\Psi_{2^{-i}}$ and $\Phi_{2^{-j}}$ we denoted the spectral multipliers as in Section \ref{subsec:func-calc}; $\Psi_{2^{-i}}=\psi(2^{-i}\sqrt{L})$ and $\Phi_{2^{-j}}=\varphi(2^{-j}\sqrt{L})$.

We set $\ell:=\lfloor s\rfloor$ and we introduce the symbols
\begin{equation}
\label{w0}
g^i(\lambda):=\frac{(k(h2^{i}\lambda)-1)\psi(\lambda)}{|\lambda|^{\ell}}\;\;\text{and}\;\;g^j(\lambda):=\frac{(k(h2^{j}\lambda)-1)\varphi(\lambda)}{|\lambda|^{\ell}},\;j>i.
\end{equation}

We proceed to justify that the assumptions of Theorem \ref{thm:S-local-kernels} are fulfilled for the symbols $g^j$, $j\ge i$, using Remark \ref{Remark:FC} $(\beta)$.
\vspace{0.2cm}

By the fact that $k\in\cC^{\tau}(\bR_+)$, the values of the derivatives $k^{(\nu)}(0)$, $0\le\nu\le\tau$, (\ref{i}) and the definitions of the symbols $\psi$ and $\varphi$ we have that:

$g^j\in\cC^{\tau-\ell}(\bR_+)$, for every $j\ge i$. Of course $\tau-\ell>d+s-\ell>d$.

$\supp g^i\subset[0,2]$ and $\supp g^j\subset[2^{-1},2]$, for every $j>i$.

$g^i(0)=\lim_{\lambda\rightarrow0^{+}}\frac{k(h2^{i}\lambda)-1}{\lambda^{\ell}}\psi(\lambda)=\frac{(h2^{i})^{\ell}k^{(\ell)}(0)}{\ell!}\psi(0)=0$.

$(g^i)^{(2\nu+1)}(0)=0$, for every $1\le 2\nu+1\le \tau-\ell$.

Moreover by the vanishing derivatives' assumption (\ref{moments}), the decay in (\ref{asPV2}) and the bottom of the support of $\varphi$, we obtain after some calculus that
$$|(g^j)^{(\nu)}(\lambda)|\le c(\tau,\ell),\quad\text{for every}\;\lambda\ge0,\;0\le\nu\le\tau-\ell,\;j\ge i,$$
where the above constant $c(\tau,\ell)>0$ is independent of $j$.
\vspace{0.2cm}

\noindent By Theorem \ref{thm:S-local-kernels}, coupled with Remark \ref{Remark:FC}$(\beta)$, the spectral multipliers $G^j_{2^{-j}}=g^{j}(2^{-j}\sqrt L)$, $j\ge i$, are integral operators and their corresponding kernels $\cG^j_{2^{-j}}(x,y)$ present the behaviour
\begin{equation}
\label{Omegakernels}
|\cG^j_{2^{-j}}(x,y)|\le c\cD_{2^{-j},\tau-\ell}(x,y),\quad x,y\in\cM,\;j\ge i.
\end{equation}

By the definition of the symbols $g^j$, $j\ge i$ in (\ref{w0}) we get
\begin{equation}\label{w1}
(K_h-I)\Psi_{2^{-i}}f=2^{-\ell i}G^i_{2^{-i}}L^{\ell/2}f
\end{equation}
and
\begin{equation}\label{w2}
(K_h-I)\Phi_{2^{-j}}f=2^{-\ell j}G^j_{2^{-j}}L^{\ell/2}f,\;\;j>i.
\end{equation}

Combining (\ref{bias1}), (\ref{fsplit}) with (\ref{w1}) and (\ref{w2}) we obtain the expansion
\begin{equation}\label{bias11}
b(x)=\sum_{j=i}^{\infty}2^{-\ell j}G^j_{2^{-j}}L^{\ell/2}f(x).
\end{equation}

Since $G^j_{2^{-j}}$ are integral operators and because of $g^j(0)=0$, for every $j\ge i$, using (\ref{Th2.2eq1}) we express $G^j_{2^{-j}}L^{\ell/2}f(x)$ as
\begin{align}\label{w3}
G^j_{2^{-j}}L^{\ell/2}f(x)&=\int_{\cM}\cG^j_{2^{-j}}(x,y)L^{\ell/2}f(y)d\mu(y)
\nonumber
\\&=\int_{\cM}\cG^j_{2^{-j}}(x,y)\big(L^{\ell/2}f(y)-L^{\ell/2}f(x)\big)d\mu(y).
\end{align}

The membership of $f$ in the H\"{o}lder space $\dot{\cH}^s$ implies that
\begin{align}\label{Hproof}
\big|L^{\ell/2}f(y)-L^{\ell/2}f(x)\big|&\le \|f\|_{\dot{\cH}^s}\rho(x,y)^{s-\ell}
\nonumber
\\
&\le 2^{\ell j}2^{-sj}\|f\|_{\dot{\cH}^s}\big(1+2^{j}\rho(x,y)\big)^{s-\ell},\quad j\ge i.
\end{align}

We equip (\ref{bias11}) with (\ref{w3}), (\ref{Omegakernels}) and (\ref{Hproof}) to arrive at the expression
\begin{align}
\label{bias12}
|b(x)|&\le\sum_{j=i}^{\infty}2^{-\ell j}\int_{\cM}|\cG^j_{2^{-j}}(x,y)|\big|L^{\ell/2}f(y)-L^{\ell/2}f(x)\big|d\mu(y)
\nonumber
\\
&\le c\|f\|_{\dot{\cH}^s}\sum_{j=i}^{\infty}2^{-sj}\int_{\cM}\cD_{2^{-j},\tau-s}(x,y)d\mu(y)
\nonumber
\\
&= c\|f\|_{\dot{\cH}^s}\sum_{j=i}^{\infty}2^{-sj}2^{jd}I_{2^{-j},\tau-s}(x),
\end{align}
where $I_{2^{-j},\tau-s}(x)$, as in Lemma \ref{L:tech}. Thanks to the assumption $\tau>d+s$, by (\ref{tech-1}), the fact that $s>0$ and (\ref{i}) we conclude that

\begin{align}
\label{biasfinal}
|b(x)|&\le c\|f\|_{\dot{\cH}^s}\sum_{j=i}^{\infty}2^{-sj}2^{jd}I_{2^{-j},\tau-s}(x)
\nonumber
\\
&\le c\|f\|_{\dot{\cH}^s}\sum_{j=i}^{\infty}2^{-sj}\le c\|f\|_{\dot{\cH}^s}2^{-is}\le C_2\|f\|_{\dot{\cH}^s}h^{s}
\end{align}
and the proof is complete.
\end{proof}

\textit{End of the proof of Theorem} \ref{Th:MSEbound}. We combine Propositions \ref{P:V} and \ref{P:B} to conclude the proof of Theorem \ref{Th:MSEbound} in the standard way.
\subsection{Kernel density estimators on the sphere} The shape of the Earth justifies the unit sphere $\bS^2$ of $\bR^3$ as the most important domain for the purposes of several sciences. In the present paper we study earthquakes that are the subject of seismology, but many other sciences like astrophysics, environment and geology could be interested in this geometry too. We describe how the kernels obtained in Section \ref{S:examples} should be used in a data analysis.

We consider the symbols
\begin{equation}
\label{gsymbols}
g^{\sigma}(\lambda):=(1+|\lambda|^{\sigma})^{-1},\;\lambda\in\bR,
\end{equation}
for $\sigma\in\bN$, with $\sigma>1$. Evidently for every $\sigma>1$, the symbol $g^{\sigma}$ is an even function such that $g^{\sigma}\in\cC^{\sigma-1}(\bR)$, $g^{\sigma}(0)=1$, $(g^{\sigma})^{(\nu)}(0)=0$, for every $1\le\nu\le\sigma-1$ and presents the decay as in (\ref{asPV2}) for $r=\sigma$. Such symbols are suitable for generating kdes by choosing the appropriate value of $\sigma$ depending on the dimension $d$ and the regularity $s$ and then the appropriate bandwidth $h$ depending also on the datasize $n$.

%d=3, then tau=3+

%For ensuring that $g_{\sigma}$ satisfies the assumptions of Theorem \ref{Th:MSEbound} on $\cM=\bB^3$, we introduce the symbols:
%\begin{equation}
%\label{kernelsexamplesB3}
%k_{m}(\lambda):=g_{7+m}(\lambda)=(1+|\lambda|^{7+m})^{-1},\;\lambda\in\bR,\;m\in\bN.
%\end{equation} 

%Let $s>0$ and denote by $\lceil s\rceil$ the smallest integer strictly grater than $s$. 
%It is easy to observe that $k_{\lceil s\rceil}$, satisfies the assumptions of Theorem \ref{Th:MSEbound} for densities on $\dot{\cH}^{s}$. In the following plots we draw the kernels $K_{m}(h\sqrt L)(x,O)$, $m=1,2,3,4$, where $O=(0,0,0)$. This corresponds to just one observation at the origin $O$. The bandwidth is selected to be $h=0.25$. \textbf{To be continued.}

%\begin{figure}\label{fig1}
%\includegraphics[scale=0.3]{BallKDE7} \includegraphics[scale=0.3]{BallKDE7}

%\includegraphics[scale=0.3]{BallKDE7} \includegraphics[scale=0.3]{BallKDE7}

%\caption{To be fixed: here we have 4 times the same kernel.}
%\end{figure}

%\section{Sphere}
For the purpose of our present study, we restrict our attention on the case of the unit sphere $\cM=\bS^2$.

Let $s>0$ and denote by $\lceil s\rceil$ the smallest integer strictly grater than $s$. The symbols (\ref{gsymbols}) for $r=\sigma:=5+\lceil s\rceil$ satisfy the assumptions of Theorem \ref{Th:MSEbound} for densities on $\dot{\cH}^s$.

The expression (\ref{kernelons2}) could be used by R (or Python etc) after the infinite series be truncated until some certain integer $N\in\bN$, namely
\begin{equation}
\label{kdes2trun}
\hat{f}_{n,h,N}(\xi):=\frac{1}{n}\sum\limits_{i=1}^{n}\sum_{\nu=0}^{N}\frac{1+2\nu}{4\pi}g^r\big(h\sqrt{\nu(\nu+1)}\big)P_{\nu}\big(\langle\xi,X_i\rangle\big),\quad\xi\in\bS^2.
\end{equation}

Note that
$$g^{r}(h\sqrt{\nu(\nu+1)})< (h\sqrt{\nu(\nu+1)})^{-r}<h^{-r}\nu^{-r},\quad\nu\in\bN.$$
Moreover, for the Legendre polynomials it is well-known that $|P_{\nu}(u)|\le 1,$ for every $u\in[-1,1]$ and of course
$\frac{1+2\nu}{4\pi}\le 0.51\frac{\nu}{\pi}$, for every $\nu\ge 25$.

Then the error (absolute value of the difference) because of the truncation of (\ref{kernelons2}) until the order $N\ge24$ can be safely bounded from above by
\begin{align}\label{eq:trunc_error_sphere}
\text{error}&\le\sum_{\nu>N}\frac{0.51 \nu}{\pi}h^{-r}\nu^{-r}=\frac{0.51}{\pi}h^{-r}\sum_{\nu=N+1}^{\infty}\nu^{-r+1}
\nonumber
\\
&\le \frac{0.51}{\pi}h^{-r}\int_{N}^{\infty}x^{-r+1}dx=\frac{0.51 h^{-r}N^{-r+2}}{\pi(r-2)}.
\end{align}

Recall that by Theorem \ref{Th:MSEbound} $h=n^{-1/(2s+2)}$, where $n$ is our datasize. 

Expression (\ref{eq:trunc_error_sphere}) asserts that an effectively large $N$ could provide a certain error-bound when the datasize $n$ and the regularity $s$ are considered as fixed.

%Note that $N$ can always win against a fixed datasize.

As an example, take $s\in(0,1]$ (the less restrictive range) which corresponds to the value $r=6$. In this case and after setting $h=n^{-1/2(s+1)}$ the error is at most
\begin{equation}\label{Nfinal}
\frac{0.51*n^{3/(s+1)}}{4\pi}N^{-4}.
\end{equation}

In a specific data analysis, with a given datasize $n$, one should bear in mind to respect (\ref{Nfinal}) for the hypothetical smoothness' level $s$ and obtain appropriate values for the error is pre-defined as ``suitable". For a data analysis of earthquakes the reader is referred to Section \ref{S:data}.

\section{Spaces of homogeneous type}\label{Sec:Doubling}
We proceed to more general assumptions than those in Section \ref{Introduction}. Specifically, we no longer assume that our space enjoys the Ahlfors regularity, rather than the so-called doubling volume property (\ref{doubling-0}) below. Such a setting is what is referred as a \textit{space of homogeneous type}. 

We replace Assumption I with the following:

(a) {\em Doubling volume condition:} There exists a constant $c_0>1$ such that
\begin{equation}\label{doubling-0}
0 < |B(x,2r)| \le c_0|B(x,r)|<\infty
\quad\hbox{for all $x \in \cM$ and $r>0$,}
\end{equation}
where $|B(x,r)|$ is the volume of the open ball $B(x,r)$ centred at $x$ of radius $r$.

\smallskip

\noindent
(b) {\em Noncollapsing condition:}
There exists a~constant $c_1>0$ such that
\begin{equation}\label{non-collapsing}
\inf_{x\in \cM}|B(x,1)|\ge c_1.
\end{equation}

\vspace{0.2cm}

We modify Assumption II by replacing the factor $t^{-d/2}$ by
\begin{equation}
\big(|B(x,\sqrt{t})||B(y,\sqrt{t})|\big)^{-1/2}
\end{equation}
in equations (\ref{Gauss-local}) and (\ref{lip}).
\vspace{0.2cm}

Some remarks are in order:

$(\alpha)$ Of course (a) and (b) hold trivially true under (i).

%\subsection{Some properties related to the geometry of the underlying space}\label{subsec:geometry}

$(\beta)$ From (\ref{doubling-0}) it follows that there exist $c_0'>0$ and $d>0$ such that
\begin{equation}\label{doubling}
|B(x,\lambda r)| \le c_0'\lambda^{d} |B(x,r)|
\quad\hbox{for every $x \in \cM$, $r>0$, and $\lambda >1$,}
\end{equation}
the constant $d$ above $d'$ is referred as the \textit{homogeneous dimension} of $(\cM,\rho,\mu)$. This generalizes effectively the Ahlfors dimension used in the previous sections.

$(\gamma)$ A connection between the volume of balls of small radius, with their radius and the dimension comes from (\ref{non-collapsing}) and (\ref{doubling}):
\begin{equation}\label{DNC}
|B(x, r)|\geq c r^d, \quad x\in \cM,\;0<r\leq 1.
\end{equation}

$(\delta)$ In the framework of spaces of homogeneous type we replace the kernels defined in (\ref{KernelD}) by
\begin{equation}
\label{KernelDd}
\cD_{\ddelta,\mm}(x,y):=\frac{\big(1+\ddelta^{-1}\rho(x,y)\big)^{-\mm}}{(|B(x,\ddelta)||B(y,\ddelta)|)^{1/2}},\quad\text{for}\;x,y\in \cM.
\end{equation}

On the background results: 

\noindent (i) Theorem \ref{thm:S-local-kernels} holds as it is, but with the kernels $\cD_{\ddelta,\mm}$ as in (\ref{KernelDd}). 

\noindent (ii) Lemma \ref{L:tech} takes the form: for every $\tau>d$, there exists a constant $c=c_{\tau}>0$ such that
\begin{equation}
\label{tech-2}
I_{h,\tau}\le c|B(x,h)|,\quad\text{for every}\;x\in\cM,\;h>0.
\end{equation}

$(\varepsilon)$ To compare the volumes of balls with different centers $x, y\in \cM$ and the same radius $r$, we note first that
$B(x,r) \subset B\big(y, \rho(y,x) +r\big)$, which coupled with (\ref{doubling}) leads to
\begin{equation}\label{D2}
|B(x, r)| \le c\big(1+ \rho(x,y)/r\big)^d  |B(y, r)|,
\quad x, y\in \cM, \; r>0.
\end{equation}

The last implies that the kernel in (\ref{KernelDd}) is estimated by:
\begin{equation}
\label{changecentersD}
D_{h,\tau}(x,y)\le c|B(x,h)|^{-1}(1+h^{-1}\rho(x,y))^{-\tau+d/2}.
\end{equation}
\vspace{0.2cm}

We are now in place to express the boundedness of the Mean Squared Error on the more general setting of spaces of homogeneous type associated with operators:
\begin{theorem}\label{Th:MSEbound2}
Let $s>0,$ $f\in L^{\infty}\cap\dot{\cH}^s$ and a symbol $k \in \cC^{\tau}(\mathbb{R}_+)$ for some $\tau>3d/2+s$, satisfying: $k(0)=1$,
\begin{equation}
\label{momentsd}
k^{(\nu)}(0)=0,\quad\text{for every}\;\;1\le\nu\le\tau,
\end{equation}
and for some $r> \tau+d$,
\begin{equation}\label{asPV2d}
|k^{(\nu)}(\lambda)|\leq C_{\tau}(1+\lambda)^{-r},\quad \text{for every}\;\;\lambda\ge0,\;0\leq \nu\leq \tau.
\end{equation}
We pick $h=h_n=n^{-\frac{1}{2s+d}}$. Then for every $n\in\bN$ the corresponding kde $\hat{f}_{n,h}$ satisfies
\begin{equation}\label{Th1d}
\sup\limits_{x\in \cM} \mathbb{E}\big[\big(\hat{f}_{n,h}(x)-f(x)\big)^2\big]\leq c C(f) n^{-\frac{2s}{2s+d}},
\end{equation}
where the constant $c>0$, depends only on $\tau,\;s,\;c_{\tau},\beta$ and the structural constants of the setting, while $C(f)$ is given by
\begin{equation}\label{C(f)d}
C(f):=\max\big(\|f\|_{\infty},\|f\|_{\dot{\cH}^s}^2\big).
\end{equation}
\end{theorem}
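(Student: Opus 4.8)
The plan is to mirror the proof of Theorem \ref{Th:MSEbound} exactly, splitting the MSE into variance and bias via (\ref{MSE2}), and re-running Propositions \ref{P:V} and \ref{P:B} with the doubling-type kernel $\cD_{\ddelta,\mm}$ from (\ref{KernelDd}) in place of the Ahlfors one from (\ref{KernelD}). The only genuinely new ingredient is bookkeeping the volume factors $|B(x,h)|$, $|B(y,h)|$ that now replace the uniform $h^{d}$, and converting them back to powers of $h$ using (\ref{DNC}) at the very end once $h=h_n\le 1$.

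\medskip

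\textbf{Variance step.} Following Proposition \ref{P:V} verbatim, set $\eta_i(x):=\cK_h(X_i,x)-\bE[\cK_h(X_i,x)]$ and bound $\sigma^2(x)=\tfrac1n\bE[\eta_1^2(x)]\le \tfrac1n\int_\cM |\cK_h(x,y)|^2 f(y)\,d\mu(y)$. By Theorem \ref{thm:S-local-kernels} (in the homogeneous-type form noted in remark $(\delta)$(i)) we have $|\cK_h(x,y)|\le cC_\tau\cD_{h,\tau}(x,y)$. Using (\ref{changecentersD}) to replace the center $y$ by $x$, $\cD_{h,\tau}(x,y)\le c|B(x,h)|^{-1}(1+h^{-1}\rho(x,y))^{-\tau+d/2}$, so that
\begin{equation*}
\bE[\eta_1^2(x)]\le c\|f\|_\infty |B(x,h)|^{-2}\int_\cM (1+h^{-1}\rho(x,y))^{-2\tau+d}\,d\mu(y)\le c\|f\|_\infty |B(x,h)|^{-1},
\end{equation*}
where the last bound is the homogeneous-type technical lemma (\ref{tech-2}), valid since $2\tau-d>d$ by the hypothesis $\tau>3d/2+s>d$. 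Finally $|B(x,h)|\ge c h^{d}$ for $0<h\le 1$ by (\ref{DNC}), giving $\sigma^2(x)\le C_1\|f\|_\infty (nh^{d})^{-1}$ uniformly in $x$.

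\medskip

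\textbf{Bias step.} This copies Proposition \ref{P:B}. Write $b(x)=(K_h-I)f(x)$, pick $i$ with $2^{-i}\le h<2^{-i+1}$, decompose $f=\Psi_{2^{-i}}f+\sum_{j>i}\Phi_{2^{-j}}f$ via \cite[Corollary 3.9]{CKP}, introduce the symbols $g^j$ as in (\ref{w0}), verify they satisfy the hypotheses of Theorem \ref{thm:S-local-kernels} (now with $\mm=\tau-\ell$ and the doubling kernels), and obtain
\begin{equation*}
|b(x)|\le c\|f\|_{\dot{\cH}^s}\sum_{j\ge i}2^{-sj}\int_\cM \cD_{2^{-j},\tau-s}(x,y)\,d\mu(y)= c\|f\|_{\dot{\cH}^s}\sum_{j\ge i}2^{-sj}\,|B(x,2^{-j})|^{-1}I_{2^{-j},\tau-s}(x),
\end{equation*}
where I used that $\int \cD_{\ddelta,\mm}(x,y)d\mu(y)$ now carries the factor $|B(x,\ddelta)|^{-1}$ times $I_{\ddelta,\mm}(x)$. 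By (\ref{tech-2}), $I_{2^{-j},\tau-s}(x)\le c|B(x,2^{-j})|$ (here $\tau-s>d$ is exactly why $\tau>d+s$ would suffice, and our stronger $\tau>3d/2+s$ certainly gives it), so the volume factors cancel and $|b(x)|\le c\|f\|_{\dot{\cH}^s}\sum_{j\ge i}2^{-sj}\le c\|f\|_{\dot{\cH}^s}2^{-is}\le C_2\|f\|_{\dot{\cH}^s}h^{s}$.

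\medskip

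Combining, $\mathrm{MSE}(\hat f_{n,h}(x))\le C_1\|f\|_\infty(nh^d)^{-1}+C_2^2\|f\|_{\dot{\cH}^s}^2 h^{2s}$ uniformly in $x$; substituting $h=h_n=n^{-1/(2s+d)}$ balances the two terms and yields the bound $cC(f)n^{-2s/(2s+d)}$. The main obstacle — really the only place requiring care beyond transcription — is tracking that every appearance of $h^{d}$ in the Ahlfors proof becomes $|B(x,h)|$ (or $|B(y,h)|$), that the center-change inequality (\ref{D2})/(\ref{changecentersD}) is what lets the $y$-dependent volume be absorbed, and that the extra $d/2$ lost in the exponent there is exactly why the hypothesis is strengthened from $\tau>d+s$ to $\tau>3d/2+s$ (so that $2\tau-d>d$ in the variance integral and $\tau-s>d$, hence also $\tau-s-d/2>d/2$, in the bias integral). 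Once that accounting is done, (\ref{DNC}) restores the clean power of $h$ and the rest is identical to the proof of Theorem \ref{Th:MSEbound}.
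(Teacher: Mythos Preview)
Your overall strategy is exactly the paper's, and the variance step is carried out correctly. The gap is in the bias step: your displayed equality
\[
\int_\cM \cD_{2^{-j},\tau-s}(x,y)\,d\mu(y)=|B(x,2^{-j})|^{-1}I_{2^{-j},\tau-s}(x)
\]
is false for the doubling-type kernel (\ref{KernelDd}), because $\cD_{\delta,m}(x,y)$ carries a factor $|B(y,\delta)|^{-1/2}$, not $|B(x,\delta)|^{-1/2}$, in the $y$-variable. You cannot simply pull out $|B(x,\delta)|^{-1}$ and be left with $I_{\delta,m}$. What the paper does---and what you yourself correctly describe in your closing paragraph but do not execute in the display---is first apply (\ref{changecentersD}) to the kernel $\cG^j_{2^{-j}}$, obtaining
\[
|\cG^j_{2^{-j}}(x,y)|\le c\,|B(x,2^{-j})|^{-1}\big(1+2^{j}\rho(x,y)\big)^{-(\tau-\ell-d/2)},
\]
and only then multiply by the H\"older factor $(1+2^j\rho(x,y))^{s-\ell}$ and integrate. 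This produces $|B(x,2^{-j})|^{-1}I_{2^{-j},\,\tau-s-d/2}(x)$, and for (\ref{tech-2}) to apply one needs $\tau-s-d/2>d$, i.e.\ $\tau>3d/2+s$. Your assertion that ``$\tau>d+s$ would suffice'' for the bias is therefore wrong in the doubling setting, and your parenthetical ``$\tau-s-d/2>d/2$'' in the closing paragraph is also the wrong threshold (it should be $>d$). This is precisely the place where the strengthened hypothesis $\tau>3d/2+s$ is genuinely used in the bias bound, not merely inherited from the variance.
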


\begin{proof}
In the light of (\ref{MSE2}), we need to bound the variance and the bias in a similar manner as in Propositions \ref{P:V} and \ref{P:B}.

We start with the variance. 

By (\ref{KernelvariancewithAR}) and (\ref{changecentersD}) we get the behaviour
\begin{equation*}
|K_h(x,y)|\le c|B(x,h)|^{-1}\big(1+h^{-1}\rho(x,y)\big)^{-\tau+d/2},\quad\text{for every}\;x,y\in\cM.
\end{equation*}
The last replaced in (\ref{ppV1}) implies since $\tau>3d/2+s>d$,
\begin{eqnarray*}
\mathbb{E}[\eta_i^2(x)]&\leq& c \|f\|_{\infty} |B(x,h)|^{-2}\int_M\big(1+h^{-1}\rho(x,y)\big)^{-2\tau+d}d\mu(y)\\ \nonumber
&=&c \|f\|_{\infty} |B(x,h)|^{-2}I_{h,2\tau-d}\leq  c \|f\|_{\infty} |B(x,h)|^{-1} 
%\nonumber
%\\
%&\le&
\le   C_1 \|f\|_{\infty} h^{-d},
\end{eqnarray*}
where we used (\ref{tech-2}) and (\ref{DNC}) respectively. Now the estimation of the variance in (\ref{EstV}) follows as in (\ref{ppV2}).
\vspace{0.2cm}

We proceed to bound the bias as in Proposition \ref{P:B}. Recall that $\ell:=\lfloor s\rfloor$. This time the kernels $\cG^j_{2^{-j}}(x,y)$ enjoy the behaviour
$$|\cG^j_{2^{-j}}(x,y)|\le c\cD_{2^{-j},\tau-\ell}(x,y)\le c|B(x,2^{-j})|^{-1}\big(1+2^{j}\rho(x,y)\big)^{-\tau+\ell+d/2},$$
thanks to (\ref{changecentersD}). Then
\begin{align}
\label{bias12d}
|b(x)|&\le\sum_{j=i}^{\infty}2^{-\ell j}\int_{\cM}|\cG^j_{2^{-j}}(x,y)|\big|L^{\ell/2}f(y)-L^{\ell/2}f(x)\big|d\mu(y)
\nonumber
\\
&\le c\|f\|_{\dot{\cH}^s}\sum_{j=i}^{\infty}2^{-sj}|B(x,2^{-j})|^{-1}I_{2^{-j},\tau-\frac{d}{2}-s}(x),
\nonumber
\\
&\le c\|f\|_{\dot{\cH}^s}\sum_{j=i}^{\infty}2^{-sj}\le C_2 h^{s},
\end{align}
where we used (\ref{tech-2}) which is valid because $\tau>3d/2+s$. 

The reminder of the proof is the same as in the proof of Theorem \ref{Th:MSEbound}.
\end{proof}

\vspace{0.2cm}

Let us close this section with some comments on the geometric assumptions.

Obviously the doubling property (\ref{doubling-0}) is more general assumption than Ahlfors regularity (\ref{Ahlfors}). A simple illustrative example could be the case of the\textit{ weighted ball}, $\bB^m:=\big\{x\in\R^m: \|x\|<1\big\}$, of $\bR^m$ with the distance (\ref{distanceB3}) and the weighted measure; \cite{DaiXu}
\begin{equation}\label{def-meas-ball}
d\mu_{\gamma}(x):= (1-\| x\|^2)^{\gamma-1/2} dx, \quad \gamma >-1.
\end{equation}

As in \cite{DaiXu} we have that
\begin{equation}\label{measure-ball}
|B(x, r)| \sim r^m(1-\|x\|^2+r^2)^\gamma,
\end{equation}
which implies that $(\cM, \rho, \mu_{\gamma})$ satisfies the doubling property (\ref{doubling-0}) and the non-collapsing condition (\ref{non-collapsing}).

Precisely, the homogeneous dimension is
$d=m+2\max(\gamma,0)$. Clearly $m\le d$, with the equality exactly when $\gamma=0$, which correspond to the unweighted case and widows the space as an Ahlfors regular one. 

Moreover, it is now apparent how the homogeneous dimension, fundamentally depends on the measure of the space and may or may not be an integer.

Note also that in the weighted case, the proper operator is
\begin{equation*}
L:=L_{\gamma}:= -\sum_{i=1}^m (1-x_i^2)\partial^2_i + 2\sum_{1\le i < j \le m}x_i x_j\partial_i\partial_j
+ (m+2 \gamma)\sum_{i=1}^m x_i \partial_i.
\end{equation*}
which satisfies Assumption II; see \cite{DaiXu,KPX}. 

\vspace{0.2cm}

Note finally that the non-collapsing condition holds true for every space $(\cM,\rho,\mu)$ of homogeneous type which is of finite measure $\mu(\cM)<\infty$; see \cite{CKP}.

%\section{Kernel density estimators on the ball}
%We presented here explicit examples of kdes on the unit ball $\bB^3$. Note that similar examples for the sphere $\bS^2$ can be found in \cite{CGP}.

%For an error at most $10^{-2}$, we need a truncation 
%\begin{equation}\label{Ntr}
%N\ge 0.845\cdot\sqrt{\frac{5}{\pi}}n^{3/4(s+1)}.
%\end{equation}

%We use the above for $s\in\{0.5,0.25\}$.
\section{Data Illustration}\label{S:data}

In this data illustration, we use earthquake location data for all earthquakes with a reported magnitude of 6.5 or higher between 1990-2021 (inclusive). These data are freely available through the United States Geological Survey website \url{https://earthquake.usgs.gov/earthquakes/search/}. In total, there are $n = 1507$ earthquakes that fit these criteria, and we plot the location of these earthquakes in Figure \ref{fig:locations}. 
\begin{figure}
    \centering
    \includegraphics[width = \textwidth]{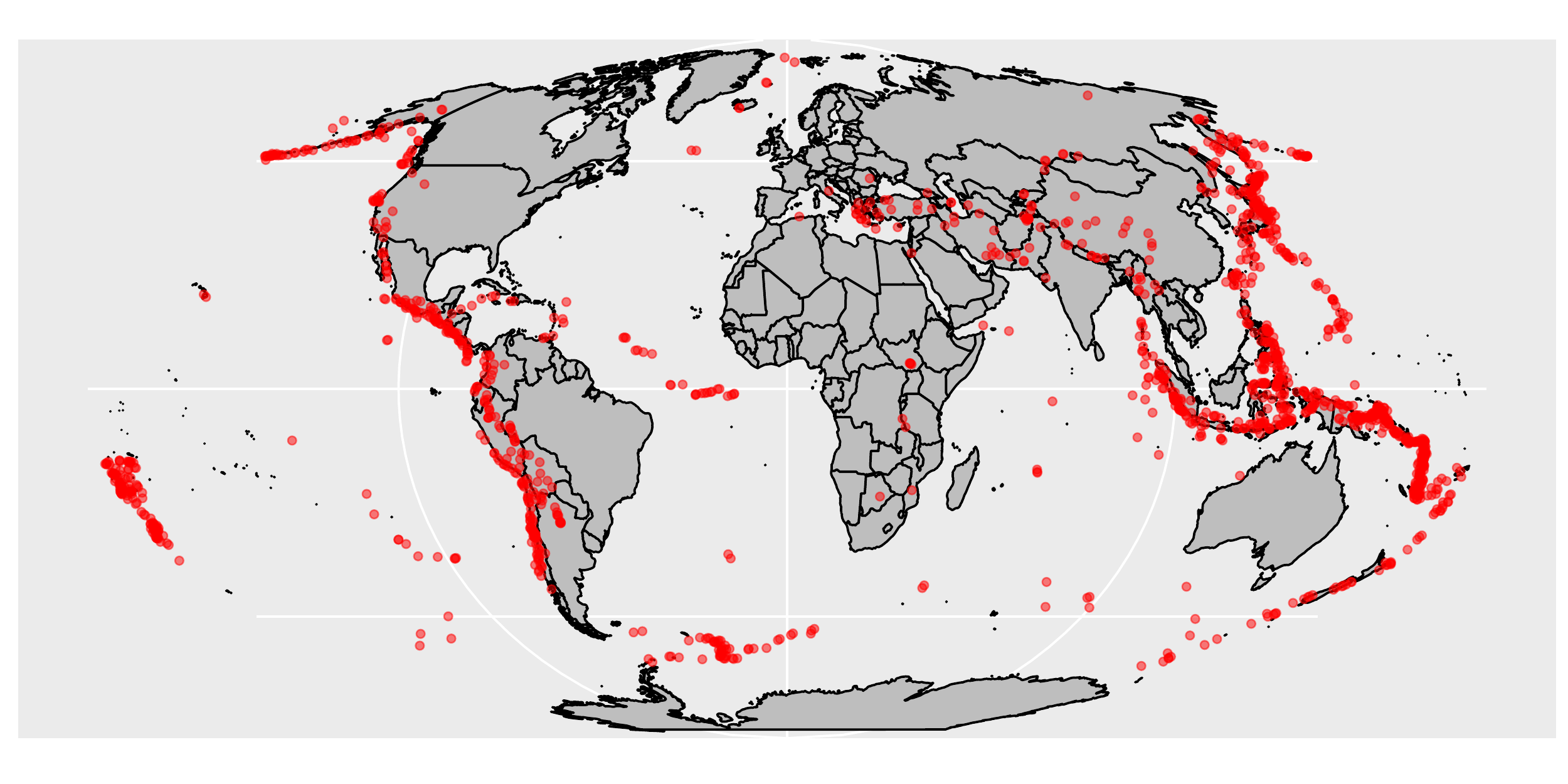}
    \caption{Earthquakes from 1990-2021 with a magnitude of 6.5 or greater.}
    \label{fig:locations}
\end{figure}

To explain some of the earthquake patterns in Figure \ref{fig:locations}, we briefly discuss plate tectonics. The Earth's crust or lithosphere is divided into distinct and irregular sections of solid rock called tectonic plates. The tectonic plates float and gradually move on the molten rock of the Earth's mantle. Many geological events (e.g., volcanic eruptions and earthquakes) occur where different tectonic plates meet. For this reason, high magnitude earthquakes are highly concentrated around tectonic plate boundaries, and this global network of plate boundaries is evident in the earthquake patterns in Figure \ref{fig:locations}. Earthquakes also occur elsewhere in the world at lower rates. 

The Circum-Pacific Belt (the west coasts of the American continents, from Alaska to East Asia, stretching down to the Pacific Islands), sometimes called the Pacific Rim, is the most seismically active. Note that the Pacific Islands (e.g., Tonga, Fiji, New Zealand, and New Caledonia) appear on the left and right of Figure \ref{fig:locations}. The entire Pacific Rim has high concentrations of high magnitude earthquakes, but we point out two other areas with very high concentrations of earthquakes. There are many earthquakes in a small area around the South Sandwich Islands, including earthquakes with 7.5 and 8.1 magnitudes on August 12, 2021. We also point out the Alpide Belt, a region that runs along the Azores, the Mediterranean, the Middle East, the Himalayas, Indonesia, and connects to the Pacific Rim in the Pacific Islands. Given the distribution of earthquakes seen here, we anticipate the need for a heterogeneous density estimate.

These data are distributed globally and are indexed on the sphere. To estimate the density of earthquakes, we approximate the density estimator in \eqref{kdes2} by selecting a finite truncation point $N$,
\begin{equation}\label{eq:trunc_kbed2}
    \hat{f}_{n,h,N}( \xi ) := \frac{1}{n} \sum^n_{i=1} \sum^N_{\nu=0}\frac{1 + 2 \nu}{4 \pi} k(h\sqrt{\nu(\nu +1)}) P_\nu\left( \langle \xi, X_i \rangle \right),
\end{equation}
where $X_i$ are earthquake locations, $k(\cdot)$ is defined as \ref{gsymbols} with $r = 5 + \lceil s\rceil$, and $P_\nu$ are Legendre polynomials.

Because the truncation induces error in the estimation, we anticipate that lower values of $N$ will decrease the accuracy. In Figure \ref{fig:upper_bound}, given $n$, we plot the theoretical upper bound of the truncation error in \eqref{eq:trunc_error_sphere} against the truncation point for various values of $s$. For all values of $s$, the upper bound of truncation error decreases as $N$ increases.
\begin{figure}
    \centering
    \includegraphics[width = .7\textwidth]{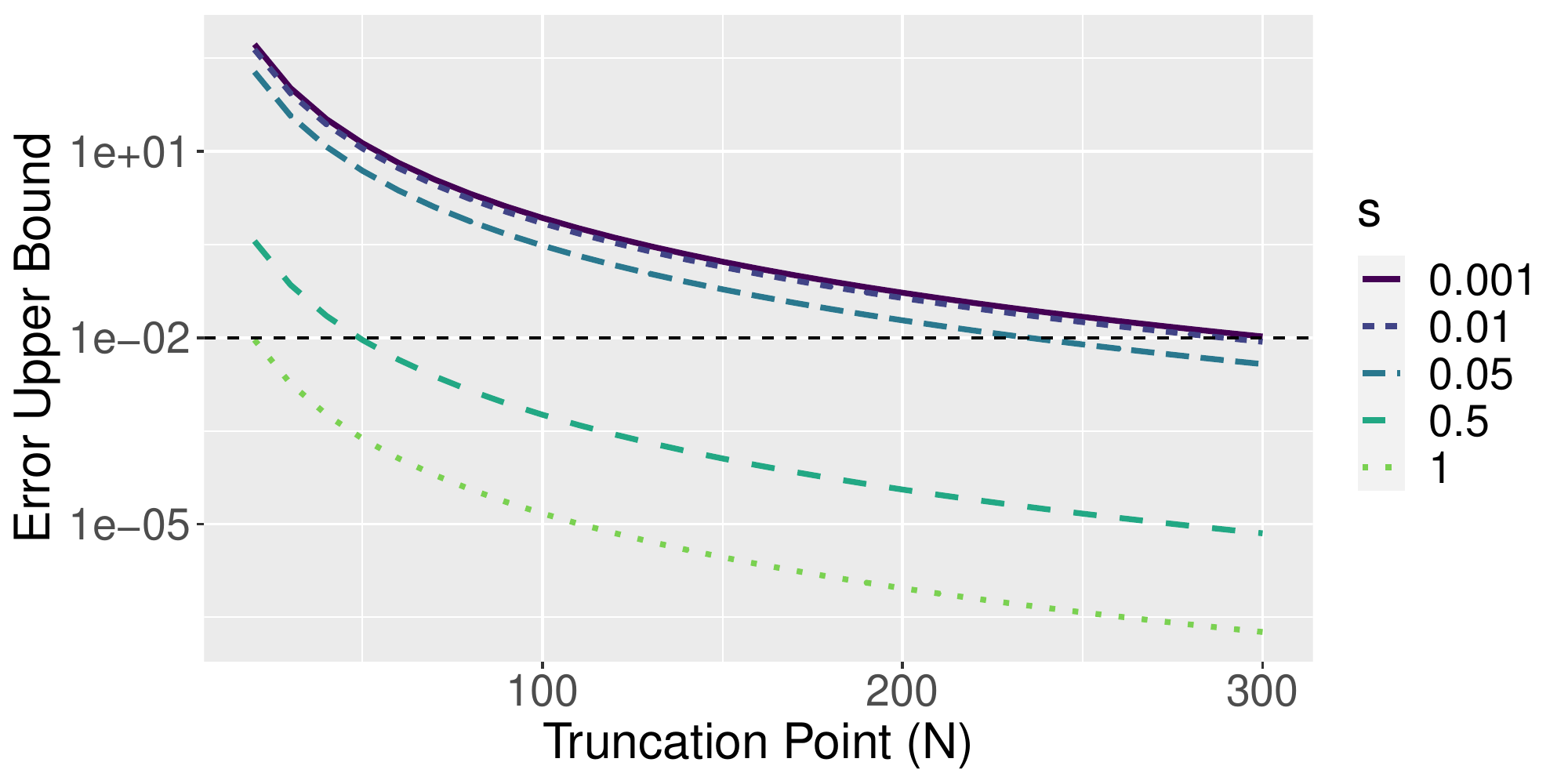}
    \caption{The theoretical upper bound on the truncation error from \eqref{eq:trunc_error_sphere} for combinations of $s$ and $N$. The dashed line indicates an error of 0.01.}
    \label{fig:upper_bound}
\end{figure}

Because \eqref{eq:trunc_kbed2} is not guaranteed to be positive, we use a \emph{rectified} density estimate, 
\begin{equation}\label{eq:rect}
\hat{f}^*_{n,h,N}( \xi ) = \max(10^{-3},\hat{f}_{n,h,N}( \xi ) ).
\end{equation}
In our analysis of these data, we explore the effect of the bandwidth and the truncation point of \eqref{eq:trunc_kbed2} on the density estimates of earthquake locations. We use out-of-sample performance to determine the bandwidth and truncation point. Specifically, we randomly hold out 20\% of the earthquakes as a test dataset $\{X^{\text{test}}_1,...,X^{\text{test}}_{n_{\text{test}}}\}$ to validate the density estimator. Using many different bandwidth and truncation point combinations, we calculate density estimators $\hat{f}^*_{n_{\text{train}},h,N}( \xi )$ using the remaining 80\% of the data. Then, at the hold-out locations, we evaluate $$\hat{f}^*_{n_{\text{train}},h,N}( X^{\text{test}}_1 ),\dots, \hat{f}^*_{n_{\text{train}},h,N}( X^{\text{test}}_{n_{\text{test}}} ).$$ Using these evaluations, we compute the out-of-sample mean log-loss (negative log-score) at the hold out locations 
$$\frac{1}{n_{\text{test}}}\sum^{n_{\text{test}}}_{i=1} -\log\left(\hat{f}^*_{n_{\text{train}},h,N}( X^{\text{test}}_i )\right).$$ The use of log-loss as a proper scoring rule is common; see for example (\cite{good1952}, \cite{gneiting2007strictly}).

 Rather than consider bandwidth directly, we let $h = n^{-1/(2s + 2)}$ and consider $s \in  \{0.001, 0.01, 0.05, 0.5, 1 \}$, where $s$ indexes the smoothness of the density (See Section \ref{sec:holder}). Based on how concentrated earthquake events are, small values of $s$ (i.e., smaller bandwidths) are preferable to smoother alternatives that will yield more uniform density estimators. In our analysis, we also vary the truncation point of the density estimator $N \in  \{5,10,20,30,40,50,75,100\}$. Larger values of $N$ yielded no improvement in log-loss.
We select the truncation point and bandwidth with the lowest out-of-sample mean log-loss. 

We plot the mean log loss as a function of truncation point for various values of $s$ in Figure \ref{fig:log_loss}. Overall, for each $s$, increasing $N$ improves out-of-sample performance up to a point; then, improvement flattens and appears to reach an asymptote. In addition, smaller values of $s$ have better out-of-sample performance; however, values of $s$ less that $0.01$ do not change model performance. The best out-of-sample performance (lowest log loss) is with $N = 75$, and there is no appreciable difference between $s = 0.01$ and $s = 0.001$ (or even smaller values of $s$). For this reason, we use $s = 0.01$.
\begin{figure}
    \centering
    \includegraphics[width = .8\textwidth]{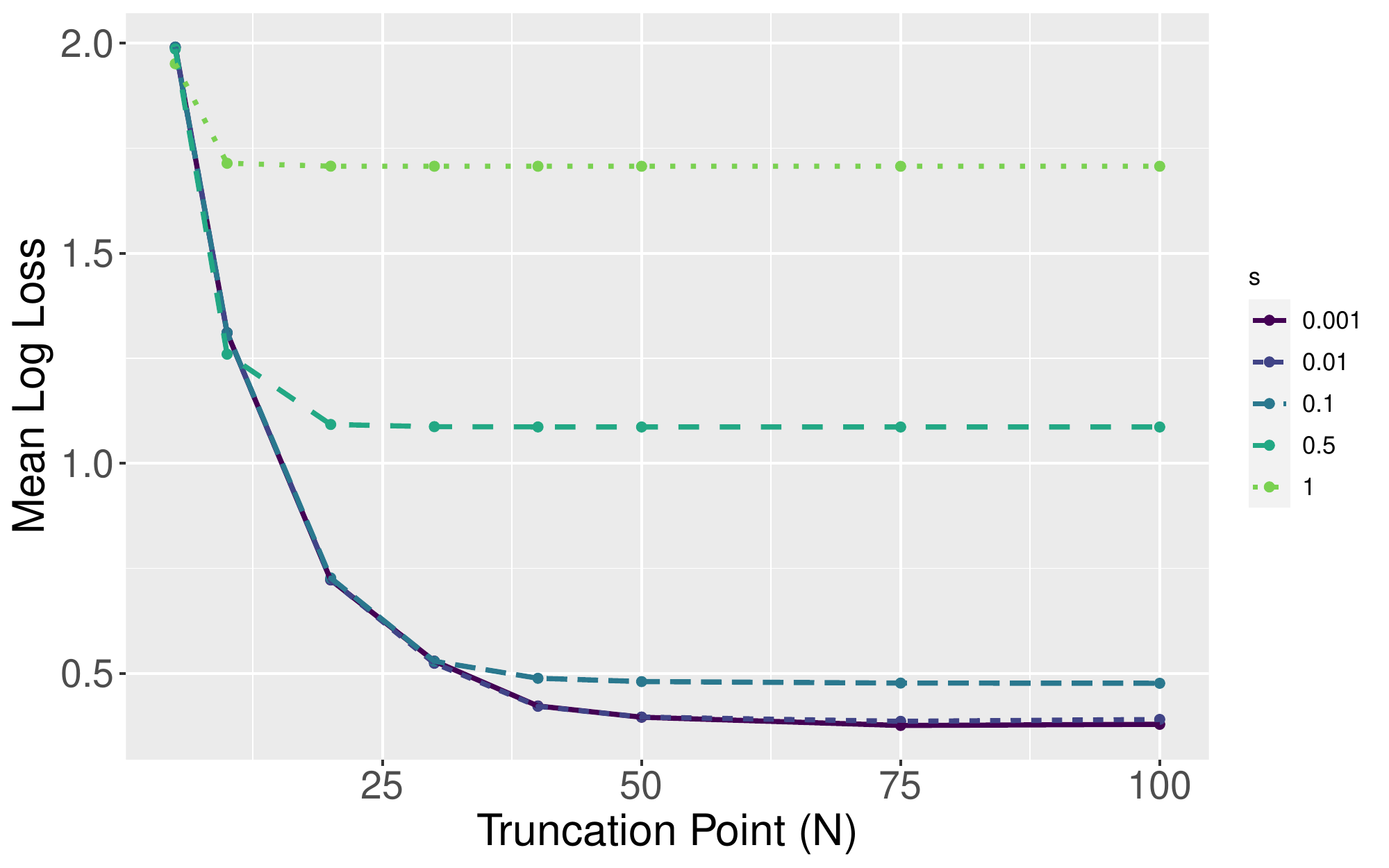}
    \caption{The mean out-of-sample log loss (negative log-score) for various combinations of $s$ and $N$.}
    \label{fig:log_loss}
\end{figure}

For $N = 75$ and $s = 0.01$, we plot a heat map of the estimated density over a fine grid over the Earth in Figure \ref{fig:kde_earthquake}. The colors are on a natural log scale to better see variability in the density. The Pacific Rim is evident in the density estimate, but the most striking features are the high estimated densities around the Pacific Islands and the Pacific Rim. We also note that the South Sandwich Islands and the Alpide belt are visible for their relatively high earthquake densities. 

\begin{figure}
    \centering
    \includegraphics[width = \textwidth]{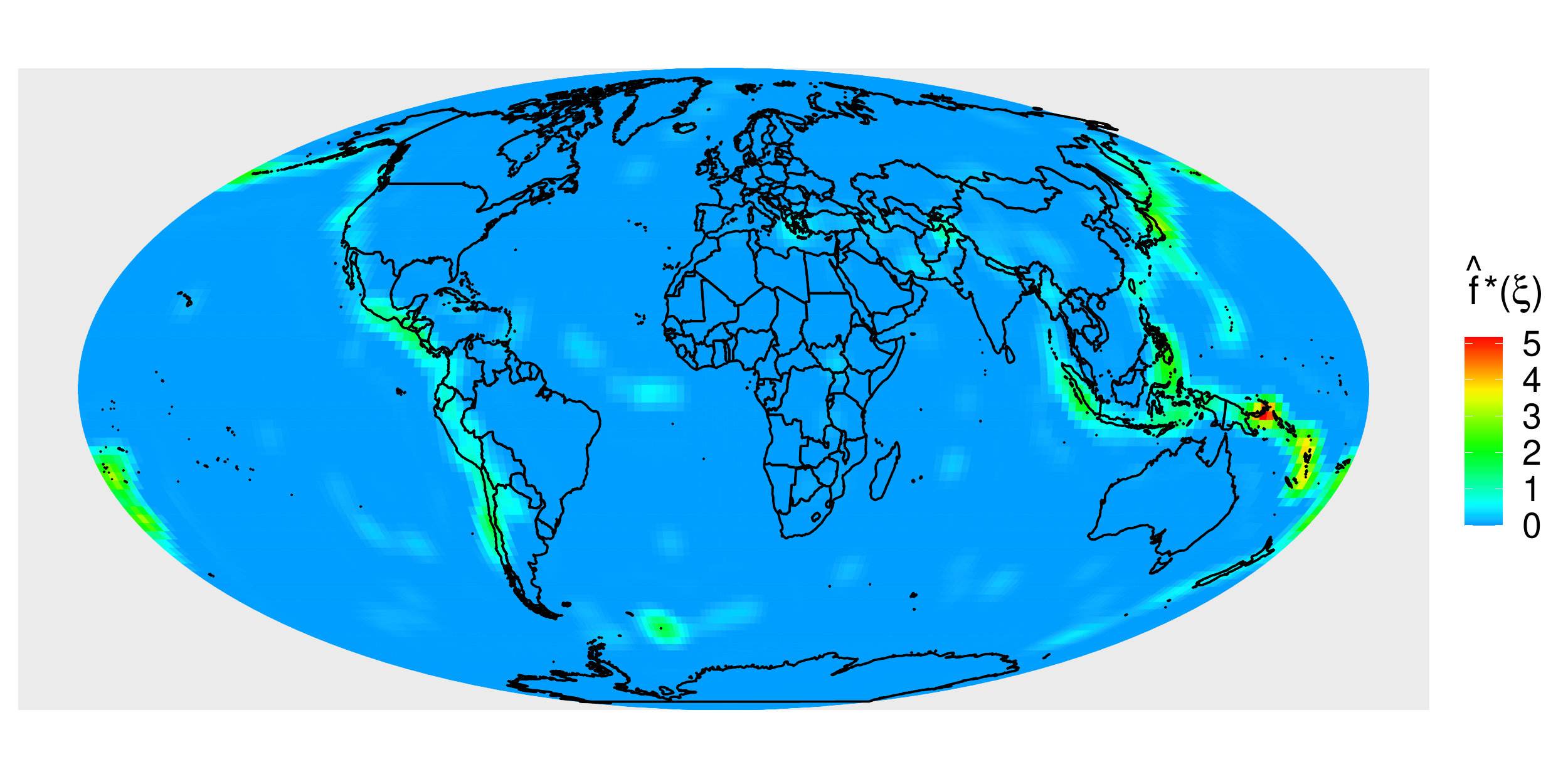}
    \caption{Kernel density estimate for the 1990-2021 earthquake data for $s = 0.01$ and $N = 75$. The colors of the density are presented on the log scale.}
    \label{fig:kde_earthquake}
\end{figure}

In this data illustration, we considered a global dataset of earthquakes from 2021. We compared many truncated kernel density estimators indexed on the sphere on a test set. We found that the kernel density estimates perform better out-of-sample with shorter bandwidths (smaller $s$) and more polynomial terms (higher $N$). For the best combination of $s$ and $N$ considered, we plot the estimated density and comment on its features. In future analyses of these data, one may also estimate the density of earthquake magnitude. In these cases, it may be beneficial to allow the magnitude density to vary smoothly over space as in \cite{sheanshang2021outlier}. In addition, one may account for aftershock excitation in the estimation, as is sometimes used in point process methodology (see, e.g., \cite{hawkes1971point}, \cite{hawkes1971spectra}, \cite{ogata1988statistical}, \cite{ogata1998space}, \cite{white2021generalized}). 

\subsection*{Data availability statement:}
$ $
\vspace{0.2cm}

The data used in this analysis are freely available through the United States Geological Survey website \url{https://earthquake.usgs.gov/earthquakes/search/}

\end{document}